\theoremstyle{definition}
\newtheorem{thm}{Theorem}[section]
\newtheorem{cor}[thm]{Corollary}
\newtheorem{prop}[thm]{Proposition}
\theoremstyle{definition}
\newtheorem{rem}[thm]{Remark}
\newtheorem{dfn}[thm]{Definition}
\newtheorem{ex}[thm]{Example}
\newcommand{\bD}{\mathbb{D}}
\newcommand{\bH}{\mathbb{H}}
\newcommand{\bN}{\mathbb{N}}
\newcommand{\bZ}{\mathbb{Z}}
\newcommand{\bF}{\mathbb{F}}
\newcommand{\sG}{\mathscr{G}}
\newcommand{\sL}{\mathscr{L}}
\newcommand{\sX}{\mathscr{X}}
\newcommand{\Br}{\mathrm{Br}}
\newcommand{\crys}{\mathrm{crys}}
\newcommand{\Crys}{\mathrm{Crys}}
\newcommand{\dR}{\mathrm{dR}}
\newcommand{\Fr}{\mathrm{Fr}}
\newcommand{\id}{\mathrm{id}}
\newcommand{\im}{\mathrm{Im}}
\newcommand{\Lie}{\mathrm{Lie}}
\newcommand{\Spec}{\mathrm{Spec}}
\newcommand{\Spf}{\mathrm{Spf}}
\newcommand{\NilCrys}{\mathrm{NilCrys}}
\newcommand{\Pic}{\mathrm{Pic}}
\newcommand{\Rad}{\mathrm{Rad}}
\begin{document}

\title[quasi-canonical liftings of $K3$ surfaces of finite height]{Construction of quasi-canonical liftings of $K3$ surfaces of finite height in odd characteristic}

\author{Kentaro Inoue}
\address{Department of Mathematics, Faculty of Science, Kyoto University, Kyoto 606-8502, Japan}
\email{inoue.kentarou.73c@st.kyoto-u.ac.jp}

\maketitle

\begin{abstract}
We construct a quasi-canonical lifting of a $K3$ surface of finite height over a finite field of characteristic $p\geq3$.
Such results are previously obtained by Nygaard-Ogus when $p\geq5$.
For this purpose, we use the display-theoretic deformation theory developed by Langer, Zink, and Lau. 
We study the display structure of the crystalline cohomology of deformations of a $K3$ surface of finite height in terms of the Dieudonn\'e display of the enlarged formal Brauer group.
\end{abstract}

\section{Introduction}
The notion of quasi-canonical liftings of varieties over a perfect field $k$ of positive characteristic is introduced by Nygaard-Ogus to show that Tate's conjecture holds for $K3$ surfaces of finite height (\cite[Definition 1.5]{no}). Let $X_0$ be a $K3$ surface of finite height over a finite field $k$ of characteristic $p$. They proved that if there is a quasi-canonical lifting of $X_{0}$, the $K3$ surface $X_{0}$ satisfies Tate's conjecture \cite[Theorem 2.1 and Remark 2.2.3]{no}. Moreover, they constructed a quasi-canonical lifting of $X_{0}$ under the assumption that $p\geq 5$. 

In this paper, we prove the following theorem:

\begin{thm}[see Theorem \ref{4.1}] \label{1.1}
Let $X_{0}$ be a $K3$ surface of finite height $h<\infty$ over a finite field $k$ of odd characteristic $p$. Then there exists a totally ramified finite extension $V/W(k)$ of degree $h$ and a quasi-canonical lifting $X/V$ of $X_{0}/k$.
\end{thm}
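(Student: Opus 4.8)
The plan is to transport the entire problem into the category of Dieudonn\'e displays, build the lifting there by a canonical (CM) construction, and then descend the result back to geometry. By the deformation theory recalled above, the display of $H^2_{\crys}$ of any deformation of $X_0$ is governed by the Dieudonn\'e display $\mathcal{P}_0$ of the enlarged formal Brauer group of $X_0$; since $X_0$ has finite height $h$, its formal Brauer group $\widehat{\Br}(X_0)$ is a one-dimensional formal group of height $h$, isoclinic of a single slope, and $\mathcal{P}_0$ is assembled from a Dieudonn\'e module free of rank $h$ over $W(k)$ with a one-dimensional Hodge filtration, together with the polarization pairing coming from Poincar\'e duality on $H^2$. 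First I would write down $\mathcal{P}_0$ and its Frobenius structure explicitly, separating the finite-height summand (slopes $\neq 1$) from the multiplicative slope-one part, so that constructing a lift of $X_0$ is reduced to constructing a suitable lift of $\mathcal{P}_0$ as a (polarized) Dieudonn\'e display.

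Next I would carry out the construction at the display level. The point is to lift $\mathcal{P}_0$ over a totally ramified extension $V/W(k)$ so that the Frobenius becomes compatible with the lifted Hodge filtration $\Fil \subset P \otimes V$ --- the display-theoretic shadow of a canonical (Lubin-Tate type) lift of $\widehat{\Br}(X_0)$. Such a Frobenius-compatible filtration does not exist over $W(k)$ itself once $h>1$, but it can be produced after adjoining a uniformizer that splits the isoclinic slope, and the ramification needed to split this single slope of height $h$ is exactly what forces $[V:W(k)]=h$. Concretely, I would choose the lift so that it carries an action of $\cO_V$ realizing the CM structure and respecting the polarization, and check that this pins the lift down essentially uniquely. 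This is the step that works uniformly for all odd $p$, including $p=3$: the Zink-Lau theory of Dieudonn\'e displays is available integrally in this range, whereas the crystalline period methods of Nygaard-Ogus require $p\geq 5$.

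Then I would return to geometry. By the deformation equivalence, the lifted display corresponds to a formal deformation of $X_0$ over $V$; to upgrade this to an honest K3 surface I would lift an ample class --- using that the CM construction is compatible with keeping a polarization of Hodge type --- and apply Grothendieck's existence theorem to algebraize the formal lift into a projective K3 surface $X/V$. Finally I would verify that $X/V$ meets the Nygaard-Ogus definition of a quasi-canonical lifting by reading off, from the Frobenius-compatible structure built into the lifted display, the required compatibility of the crystalline Frobenius on $H^2_{\crys}$ with the Hodge filtration of the lift.

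The step I expect to be the main obstacle is the passage between displays and geometry, in two guises. First, one must know that the Langer-Zink-Lau deformation theory identifies K3 deformations with deformations of $\mathcal{P}_0$ integrally and functorially in the delicate case $p=3$, so that the abstract display lift is effective rather than merely formal. Second, one must ensure that the constructed $\cO_V$-linear, polarization-compatible lift genuinely arises from an algebraic surface and that the resulting $X/V$ satisfies Nygaard-Ogus's definition, not merely that its formal Brauer group is a canonical lift. Controlling the interaction of the CM structure on the finite-height summand with the global polarized F-crystal, and checking that it yields the quasi-canonical condition, is where the real work lies.
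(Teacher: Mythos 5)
Your overall architecture matches the paper's: lift the formal Brauer group ``canonically'' over a totally ramified degree-$h$ extension $V/W(k)$, assemble from it a $K3$ display over each $V/\pi^{n+1}$, invoke Lau's equivalence (Theorem \ref{dispdef}) to obtain deformations $X_n$, algebraize by lifting an ample class, and read off quasi-canonicity from the Frobenius structure of the formal-group lift. But two points are genuine gaps rather than routine verifications. First, the deformation theory classifies lifts of $X_0$ by lifts of the $K3$ display $\underline{H}^2_{\crys}$, not by lifts of the Dieudonn\'e display of $\psi_{X_0/k}$, and your proposal slides between the two (it also conflates $\psi_{X_0/k}$, of height $22-h$, with $\widehat{\Br}_{X_0/k}$, of height $h$). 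To conclude that the $K3$ surface $X_n$ produced from the direct-sum display $\underline{D}(G_n)\oplus\underline{D}(H_n)\oplus\underline{D}(G_n^{\ast})(-1)$ actually has enlarged formal Brauer group $G_n\oplus H_n$ --- so that the structure you built into the display is reflected in the geometry --- you need the exact sequence of $\underline{\widehat{W}}(R)$-displays
\[
0\to \underline{D}(\psi_{X/R})\to \underline{H}^{2}_{\crys}(X/\widehat{W}(R))\to \underline{D}(\widehat{\Br}^{\ast}_{X/R})(-1)\to 0,
\]
compatible with base change (Corollary \ref{3.5}). This is the main technical content of the paper, proved by lifting the Nygaard--Ogus morphism of $F$-crystals to the $p$-torsion-free universal deformation ring and specializing. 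You correctly identify its absence as ``where the real work lies,'' but you do not supply it, and without it the argument does not close.

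Second, the input you call a ``CM lift'' must be made precise: what the proof actually uses is Hazewinkel's theorem that a one-dimensional formal group of height $h$ over $k=\bF_{p^m}$ admits a lift $G$ over a totally ramified extension $V/W(k)$ of degree exactly $h$ together with an endomorphism $F\colon G\to G$ lifting $\Fr^{m}$. It is this lifted endomorphism, rather than an $\cO_V$-action as such, that verifies the Nygaard--Ogus criterion: one checks that an element $\gamma$ of the crystalline Weil group of degree $m$ acts on $\bD(G^{\ast})_V\otimes\overline{K}$ as $\bD(F^{\ast})\otimes\gamma$ and hence preserves the Hodge filtration on $H^2_{\dR}(X/V)\otimes\overline{K}$. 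Your heuristic that ``splitting the isoclinic slope forces $[V:W(k)]=h$'' points at the right phenomenon but is not an existence proof for the required lift. With these two ingredients supplied, your outline becomes the paper's proof.
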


This result is previously obtained by Nygaard-Ogus when $p\geq5$ (see \cite[Theorem 5.6]{no}). We give an alternative proof when $p\geq5$. Our result seems new when $p=3$.

Recall that Nygaard-Ogus established a one-to-one correspondence between deformations of $X_{0}$ over $k[t]/(t^n)$ and deformations of the crystalline cohomology $H^{2}_{\crys}(X_{0}/W(k))$ over $k[t]/(t^n)$ as crystals with additional structure (Frobenius, pairing, and Hodge filtration) under the assumption that $p\geq5$. We need the assumption that $p\geq5$ to justify some calculation for divided power (see \cite[Lemma 4.6]{no}). Furthermore, this correspondence is proved only over the base $k[t]/(t^n)$ because crystals behave well only over a base whose PD envelope is $p$-torsion free (see the proof of \cite[Theorem 4.5]{no}).

In this paper, we use the display-theoretic deformation theory of $K3$ surfaces developed by Langer, Zink, and Lau (\cite{lz2}, \cite{lau2}). When $p \geq 3$, the crystalline cohomology of a $K3$ surface is naturally equipped with a display structure, and there is a one-to-one correspondence between deformations of $X_0$ over an arbitrary Artin local ring $R$ with residue field $k$ and deformations of the crystalline cohomology $H^{2}_{\crys}(X_{0}/W(k))$ over $R$ as displays with additional structure.

The key step to prove Theorem \ref{1.1} is to study a relation between the display associated to the crystalline cohomology of a $K3$ surface and the Dieudonn\'e display of the enlarged formal Brauer group. We note that the following Theorem is a display-theoretic analogue of \cite[Theorem 3.20]{no} (for the notation on displays, see Section \ref{Section:2}).

\begin{thm}[see Corollary \ref{3.5}]
Let $k$ be a perfect field of characteristic $p\geq3$, $R$ be an Artin local ring with residue field $k$, and $X$ be a $K3$ surface of finite height over $R$ (i.e.\ a proper flat scheme over $R$ whose closed fiber is a $K3$ surface of finite height). Let $\widehat{\Br}_{X/R}$ (resp.\ $\psi_{X/R}$) be the formal Brauer group (resp.\ the enlarged formal Brauer group) associated to $X/R$. Then there exists the following exact sequence of displays over the small Witt frame $\widehat{W}(R)$:
\begin{align*}
 0\to \underline{D}(\psi_{X/R})\to \underline{H}^{2}_{\crys}(X/\widehat{W}(R))\to \underline{D}(\widehat{\Br}^{\ast}_{X/R})(-1)\to 0.   
\end{align*} 
Moreover, this sequence is compatible with base change with respect to $R$.
\end{thm}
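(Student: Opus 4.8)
The plan is to construct the exact sequence by first producing it at the level of crystals/$F$-isocrystals and then upgrading it to a statement about displays. The starting point is the classical filtration on the crystalline cohomology $H^{2}_{\crys}(X/\widehat{W}(R))$ coming from the slope/Newton decomposition of a $K3$ surface of finite height: over the residue field the $F$-isocrystal $H^{2}$ splits into slopes $<1$, slope $1$, and slopes $>1$, and the enlarged formal Brauer group $\psi_{X/R}$ governs the slope-$<1$ part while the dual formal Brauer group governs the slope-$>1$ part (via the Tate twist $(-1)$). Concretely, I would first recall from the work of Artin--Mazur and Nygaard--Ogus that $\underline{D}(\psi_{X/R})$ is canonically identified with the sub-$F$-crystal of $H^{2}_{\crys}$ of slopes $\le 1$, and that the quotient is the Tate twist of the Dieudonn\'e crystal of $\widehat{\Br}^{\ast}_{X/R}$. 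This identification is the display-theoretic analogue of \cite[Theorem 3.20]{no}, so the geometric content is already available; the task is to lift it to displays over $\widehat{W}(R)$.

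Next I would set up the comparison between the Dieudonn\'e display of a formal $p$-divisible group and the display attached to crystalline cohomology. By the Langer--Zink--Lau theory invoked in the introduction, $H^{2}_{\crys}(X/\widehat{W}(R))$ carries a canonical display structure over the small Witt frame, and the enlarged formal Brauer group $\psi_{X/R}$ and the Brauer group $\widehat{\Br}_{X/R}$ have associated Dieudonn\'e displays $\underline{D}(\psi_{X/R})$ and $\underline{D}(\widehat{\Br}_{X/R})$. The crux is to check that the slope filtration above is realized by a morphism of displays: that the inclusion $\underline{D}(\psi_{X/R})\hookrightarrow \underline{H}^{2}_{\crys}(X/\widehat{W}(R))$ respects the $F$, $F_1$ (or $V^{-1}$), and Hodge-filtration data of the display structures, and likewise that the induced map onto $\underline{D}(\widehat{\Br}^{\ast}_{X/R})(-1)$ is a display morphism. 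Here I would lean on the fact that both constructions recover the same Dieudonn\'e crystal after base change to the crystalline site, so the underlying $W(R)$-module maps agree; what needs verifying is compatibility of the normal decompositions and the operators, which over an Artin local ring can be reduced by nilpotence/deformation arguments to the case of the residue field $k$, where the statement is the well-understood slope decomposition.

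The main obstacle I expect is precisely this upgrade from crystals to displays: a short exact sequence of $F$-crystals need not in general come from a short exact sequence of displays, because the display datum involves the finer Hodge-filtration/normal-decomposition information (the submodule $\hat{W}(R)\otimes L \oplus I_R\otimes T$ structure) that is not visible at the level of the underlying crystal. I would handle this by showing that the subdisplay $\underline{D}(\psi_{X/R})$ is a direct summand of the normal decomposition in the appropriate sense, i.e.\ that the Hodge filtration of $H^{2}$ induces compatible Hodge filtrations on sub and quotient; the finite-height hypothesis is what guarantees the relevant slope bound and hence the clean splitting into the $\le 1$ and $\ge 2$ parts with the slope-$1$ line distributed correctly between $\psi_{X/R}$ and its multiplicative part. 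Finally, for the base-change compatibility, I would observe that all the functors involved --- crystalline cohomology with its display structure, the (enlarged) formal Brauer group, and the Dieudonn\'e display functor $\underline{D}$ --- are compatible with base change along maps of Artin local rings with residue field $k$, so the constructed maps commute with base change by naturality, which gives the final assertion of the corollary.
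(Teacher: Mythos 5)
Your skeleton matches the paper's: start from the Nygaard--Ogus exact sequence of $F$-crystals and its compatibility with the Hodge filtration (Theorem \ref{3.2}), then upgrade it to an exact sequence of displays. You also correctly identify the main obstacle --- an exact sequence of $F$-crystals does not automatically refine to one of displays, because the display carries the extra data of the filtration and the divided Frobenius. The gap is in how you propose to close that obstacle. You claim the compatibility of the normal decompositions and operators ``can be reduced by nilpotence/deformation arguments to the case of the residue field $k$''; this is not true as stated. The Hodge filtration on $H^2_{\dR}(X/R)$ genuinely varies with $R$ and is not determined by its reduction modulo the maximal ideal, so filtration-compatibility of $\rho$ cannot be checked on the special fiber --- it is precisely the content of the commutative diagram in \cite[Theorem 3.20]{no} over $R$ itself. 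More seriously, the operator $F_1$ of a $\underline{\widehat{W}}(R)$-display satisfies $pF_1=F|_{F^1}$, but $\widehat{W}(R)$ has $p$-torsion when $R$ is non-reduced, so knowing that $\rho$ commutes with $F$ only gives $p(\rho F_1-F_1\rho)=0$ and does not force $\rho$ to commute with $F_1$; reducing to $k$ cannot detect the discrepancy. This is exactly the phenomenon that makes $F_1$ genuinely extra data in Zink--Lau theory.

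The device the paper uses to close this gap is absent from your proposal: one works over the truncated universal deformation ring $A_n=W(k)[[t_1,\dots,t_{20}]]/(t_1^n,\dots,t_{20}^n)$, which is $p$-torsion free and carries the Frobenius lift $\sigma(t_i)=t_i^p$. Over the tautological frame $\underline{A_n}$ a display is just a filtered module with a $\sigma$-linear $F$ satisfying $F(F^iM)\subset p^iM$ plus strong divisibility, and the divided operators $F_i=p^{-i}F$ are \emph{determined} by $F$; hence a morphism of displays is nothing more than a map commuting with $F$ and preserving the filtrations, and both conditions, as well as exactness on each filtration step, are supplied directly by Theorem \ref{3.2} (Proposition \ref{3.3} and Theorem \ref{3.4}). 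One then obtains the sequence over $\underline{\widehat{W}}(R)$, with the correct $F_1$, by base change along the frame morphism $\underline{A_n}\to\underline{\widehat{W}}(R)$, $t_i\mapsto[x_i]$; this also gives the base-change compatibility for free. So the ``deformation argument'' must go up to the $p$-torsion-free universal deformation base, not down to the residue field. (A minor slip: the quotient $\underline{D}(\widehat{\Br}^{\ast}_{X/R})(-1)$ is the slope $1+1/h$ part, not a ``slopes $\geq2$'' part, though this does not affect the structure of the argument.)
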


\begin{rem}
From \cite[Theorem 2.1]{no} and Theorem \ref{1.1}, it follows that Tate's conjecture holds for $K3$ surfaces of finite height over finite fields of characteristic $p\geq3$.
Note that Tate's conjecture for $K3$ surfaces is previously proved by Madapusi Pera
\cite[Theorem 1]{mp}, \cite[Theorem A.1]{km}.
(See also \cite{mp2}, \cite[Section 6.4]{iik}.)
\end{rem}

\begin{rem}
If an extension of the base field $k$ is allowed, Theorem \ref{1.1} is previously obtained by Ito-Ito-Koshikawa \cite[Corollary 9.11]{iik} using the \'etaleness of the Kuga-Satake morphism. However, it seems difficult to control the extension degree by the method of \cite{iik}.
\end{rem}

\begin{rem}
In characteristic $2$, it is not known whether the crystalline cohomology of a $K3$ surface is equipped with a display structure.
This is why we assume $p \geq 3$ throughout this paper.
\end{rem}

The organization of this paper is as follows. In Section \ref{Section:2}, we review the display-theoretic deformation theory of $p$-divisible groups and $K3$-surfaces. In Section \ref{Section:3}, we study the relation between the Dieudonn\'e display of the enlarged formal Brauer group and the display of a $K3$ surface. In Section \ref{Section:4}, we prove the main theorem of this paper.

\section{Frames and displays}
\label{Section:2}

In this section, we review some results about displays following \cite{lau2} (see also \cite{lz1}, \cite{lz2}).

\subsection{Frames}

\begin{dfn}[{\cite[Definition 2.0.1]{lau2}}]
A \emph{frame} $\underline{S}=(S, \sigma, \tau)$ consists of a commutative $\bZ$-graded ring 
\begin{align*}
 S=\bigoplus_{n\in \bZ} S_n   
\end{align*}
and ring homomorphisms $\sigma, \tau: S\to S_0$ satisfying the following conditions:
\begin{enumerate}
    \item$\tau_0: S_0\to S_0$ is the identity, and $\tau_{-n}:S_{-n}\to S_0$ is bijective for $n\geq1$. Let $t\in S_{-1}$ be the unique element such that $\tau_{-1}(t)=1$.
    \item $\sigma_0: S_0\to S_0$ is a Frobenius lift (i.e.\ a ring homomorphism satisfying $\sigma_0(x)-x\in pS_0$ for all $x\in S_0$), and $\sigma_{-1}(t)=p$.
    \item $p\in \Rad(S_0)$. Here $\Rad(S_0)$ is the Jacobson radical of $S_0$.
\end{enumerate}

 A \emph{morphism of frames} is a morphism of graded rings that commutes with $\sigma$ and $\tau$.
\end{dfn}

\begin{rem} \label{2.2}
A frame $\underline{S}$ is uniquely determined by the graded ring $S_{\geq0}=\bigoplus_{n\geq0}S_n$ together with the ring homomorphism $\sigma:S_{\geq0}\to S_0$ and the homomorphism of graded $S_{\geq0}$-modules $t: S_{\geq1}\to S_{\geq0}$ satisfying the following conditions:
\begin{enumerate}
    \item $\sigma_0: S_0\to S_0$ is a Frobenius lift, and $\sigma(t(a))=p\sigma(a)$ for all $a\in S_{\geq1}$.
    \item $p\in \mathrm{Rad}(S_0)$.
\end{enumerate}
\end{rem}

\begin{ex} \label{2.3}
Let $A$ be a $p$-torsion free ring with $p\in \mathrm{Rad}(A)$, and $\sigma_0: A\to A$ be a Frobenius lift. There is a unique frame $\underline{A}=(A[t], \sigma, \tau)\ (\deg(t)=-1)$ such that $\sigma_{-1}(t)=p$ and $\tau_{-1}(t)=1$. This frame is called the \emph{tautological frame} associated to $A$.
\end{ex}

\begin{ex} \label{2.4}
Let $R$ be an Artin local ring with residue field of characteristic $p\geq3$. Let $\widehat{W}(R)$ be the small Witt ring defined in \cite[Section 2]{zi1}. We obtain a frame $(\underline{\widehat{W}}(R), \sigma, \tau)$ as follows. We set $\underline{\widehat{W}}(R)_0=\widehat{W}(R)$ and $\underline{\widehat{W}}(R)_n=\hat{I}(R)=V(\widehat{W}(R))$ as a $\underline{\widehat{W}}(R)_0$-module for $n\geq1$. For $n,m\geq1$, a multiplication map $\underline{\widehat{W}}(R)_n\times \underline{\widehat{W}}(R)_m\to \underline{\widehat{W}}(R)_{n+m}$ is given by 
\begin{align*}
 \underline{\hat{I}}(R)\times \underline{\hat{I}}(R)\to \underline{\hat{I}}(R), \quad (V(a), V(b))\mapsto V(ab).   
\end{align*}
Let $\sigma_0:\widehat{W}(R)\to \widehat{W}(R)$ be the Witt vector Frobenius, and we put $\sigma_n(V(a))=a$ for $a\in \widehat{W}(R)$ and $n\geq1$. The map $t:\underline{\widehat{W}}(R)_1\to \underline{\widehat{W}}(R)_0$ is the inclusion map, and $t:\underline{\widehat{W}}(R)_{n+1}\to \underline{\widehat{W}}(R)_n$ is the multiplication map $p:\hat{I}(R)\to \hat{I}(R)$ for $n\geq1$. This determines a frame $(\underline{\widehat{W}}(R), \sigma, \tau)$ uniquely by Remark \ref{2.2}. This frame is denoted simply by $\underline{\widehat{W}}(R)$ and called the \emph{small Witt frame} associated to $R$.
\end{ex}

\begin{rem} \label{2.5}
Let $A$ be as in Example \ref{2.3}, $R$ be as in Example \ref{2.4}, and $f: A\to \widehat{W}(R)$ be a ring homomorphism. Then $f$ induces the unique morphism of graded rings $A[t]\to \underline{\widehat{W}}(R)$ which commutes with $\tau$. This map commutes with $\sigma$, and induces a morphism of frames $\underline{A}\to \underline{\widehat{W}}(R)$.
\end{rem}

\subsection{Displays}

\begin{dfn}[{\cite[Definition 3.2.1]{lau2}}]
Let $\underline{S}=(S,\sigma,\tau)$ be a frame.
Let $\pi: S \to S_0/(tS_1)$ be the composite of
the projection $S\to S_0$ and $S_0 \to S_0/(tS_1)$.
\begin{enumerate}
\item An \emph{$\underline{S}$-display} (or simply \emph{display}) $\underline{M}=(M, F)$ consists of a finite projective graded $S$-module $M$ and a $\sigma$-linear map $F:M\to M^{\tau}$ which induces an isomorphism of $S_0$-modules $M^{\sigma}\stackrel{\sim}{\to} M^{\tau}$. Here, for a ring homomorphism $f:A\to B$ and an $A$-module $N$, we put $N^{f}:=N\otimes_{A,f} B$.
\item A \emph{morphism of displays} is a morphism of graded $S$-modules that commutes with $F$.
\item We call a display $(M, F)$ an \emph{effective display} if $M^{\pi}:=M\otimes_{S,\pi}S_0/tS_1$ concentrates on non-negative degrees.
\item An effective display $(M, F)$ is called a \emph{$d$-display} if $(M^{\pi})_i=0$ for $i\notin [0, d]$. Here $(M^{\pi})_i$ denotes the degree $i$ part of the graded $S_0/tS_1$-module $M^{\pi}$.
\end{enumerate}
\end{dfn}

\begin{ex}
Let $\underline{S}=(S, \sigma, \tau)$ be a frame and $n\in \bZ$. Then $(S(n), \sigma)$ is an $\underline{S}$-display. Here we put $S(n)_i:=S_{n+i}$. This display is denoted by $\underline{S}(n)$.
\end{ex}

\begin{dfn}
Let $\underline{M}=(M, F)$ and $\underline{M'}=(M', F')$ be $\underline{S}$-displays.
\begin{itemize}
    \item The \emph{direct sum} of them is the $\underline{S}$-display $\underline{M}\oplus \underline{M'}=(M\oplus M', F\oplus F')$.
    \item The \emph{tensor product} of them is the $\underline{S}$-display $\underline{M}\otimes_{\underline{S}} \underline{M'}=(M\otimes_{S} M', F\otimes_{S} F')$.
    \item The \emph{dual} of  $\underline{M}$ is the $\underline{S}$-display $\underline{M}^{\ast}=(M^{\ast}, (F^{\ast})^{-1})$. Here $M^{\ast}$ is the dual of the $S$-module $M$.
\end{itemize} 
\end{dfn}

\begin{dfn}
Let $\underline{M}=(M, F)$, $\underline{M'}=(M', F')$, and $\underline{M''}=(M'', F'')$ be $\underline{S}$-displays. A sequence of morphisms of displays
\begin{align*}
 0\to \underline{M}\to \underline{M'}\to \underline{M''}\to 0
 \end{align*} is an \emph{exact sequence of displays} if the underlying sequence of $S$-modules
\begin{align*}
 0\to M\to M'\to M''\to 0   
\end{align*} is exact.
\end{dfn}

\begin{dfn}
Let $\underline{S}\to \underline{S'}$ be a morphism of frames, and $\underline{M}=(M, F)$ be an $\underline{S}$-display. The \emph{base change} of $\underline{M}$ by $\underline{S}\to \underline{S'}$ is the $\underline{S'}$-display $\underline{M}\otimes_{\underline{S}} \underline{S'}=(M\otimes_{S} S', F\otimes_{S} S')$.
\end{dfn}

\begin{rem} \label{taut disp}
Let $A$ be a $p$-adic ring (i.e.\ complete and separated in the $p$-adic topology). Assume that $A$ is $p$-torsion free. Let $\sigma:A\to A$ be a Frobenius lift. An $\underline{A}$-display is equivalent to a finite projective $A$-module $M$ with a descending finite filtration by direct summands $(F^iM)_{i\in \bZ}$ together with $\sigma$-linear maps $F_i: F^iM\to M$ satisfying the following conditions (\cite[Example 3.6.2]{lau2}):
\begin{itemize}
    \item $pF_{i+1}=F_i|_{F^{i+1}M}$ for all $i\in \bZ$.
    \item $M$ is generated by the union of all $F_i(F^iM)$.
\end{itemize} 
In this equivalence, a $d$-display corresponds to a triple
\[ (M, (F^iM)_{i\in \bZ}, F_i) \]
such that $F^iM=0$ for $i\geq d+1$ and $F^iM=M$ for $i\leq 0$.
\end{rem}

\begin{rem} \label{2.12}
Let $R$ be as in Example \ref{2.4}. An effective $\underline{\widehat{W}}(R)$-display is equivalent to a display $(P_i, \iota_i, \alpha_i, F_i)_{i\geq0}$ over the small Witt frame in \cite[Definition 3]{lz2} (see also \cite[Example 3.6.3]{lau2}). In this equivalence, a $d$-display corresponds to $(P_i, \iota_i, \alpha_i, F_i)_{i\geq0}$ such that $\alpha_i: \hat{I}(R)\otimes P_i\to P_{i+1}$ is surjective (equivalently, $(P_i, \iota_i, \alpha_i, F_i)_{i\geq0}$ has a \emph{normal decomposition} $(L_i)_{i\geq0}$ in the sense of \cite[Definition 3]{lz2} such that $L_i=0$ for $i\geq d+1$). In particular, a $1$-display over $\underline{\widehat{W}}(R)$ is equivalent to a quadruple 
\begin{align*}
    (P, Q, F, F_1),
\end{align*} where $P$ is a finite projective $\widehat{W}(R)$-module, $Q$ is a submodule of $P$ such that $\hat{I}(R)P\subset Q$, the quotient $P/Q$ is projective over $R$, the maps $F:P\to P$ and $F_1:Q\to P$ are $\sigma$-linear morphisms such that $F_1(Q)$ generates $P$ as a $\widehat{W}(R)$-module. We note that this is a \emph{Dieudonn\'e display} over $R$ in the sense of \cite[Definition 2.6]{lau1}.
\end{rem}

\subsection{Displays and $p$-divisible groups over $p$-torsion free bases with Frobenius lifts}

Let $A$, $p$, and $\sigma$ be as in Remark \ref{taut disp}. Moreover, we assume that $p\geq3$. Let $G$ be a $p$-divisible group over $A$. We have a covariant Dieudonn\'e crystal $\bD(G)$ which is a crystal on the nilpotent crystalline site $\NilCrys(\Spf(A)/\Spf(\bZ_p))$ (\cite[Chapter IV]{me}). We set
\begin{align*}
    D(G)&:= \bD(G)_{A}
\\ F^1D(G)&:= \mathrm{Ker}(\bD(G)_{A}\twoheadrightarrow \mathrm{Lie}(G))=\mathrm{Lie}(G^{\ast})^{\ast}.
\end{align*}
Let $G_0:=G\otimes A/pA$. The ring morphism $\sigma$, the Frobenius map $\Fr:A/pA\to A/pA$, and the Verschiebung map  $G_{0}^{(p)}\to G_0$ induce the following morphisms of $A$-modules
\begin{align*} D(G)^{\sigma}&=(\bD(G_0)_{(A\twoheadrightarrow A/pA)})^{\sigma}
\\ &=(\Fr_{\crys}^{\ast}\bD(G_0))_{(A\twoheadrightarrow A/pA)}
\\ &=\bD(G_{0}^{(p)})_{(A\twoheadrightarrow A/pA)}
\\ &\to \bD(G_0)_{(A\twoheadrightarrow A/pA)}
 \\ &=D(G). 
\end{align*} 
Note that the PD thickening $A\twoheadrightarrow A/pA$ is nilpotent because $p\geq3$. The $\sigma$-linear morphism induced by the composite of the above morphisms is denoted by $F:D(G)\to D(G)$.

The following result is presumably well-known to experts (see \cite[Theorem 3.17]{lau1}). We give the proof for readers' convenience.

\begin{prop} \label{dd}
The triple $(D(G)\supset F^1D(G), F)$ is a $1$-display over the tautological frame $\underline{A}$ (see Remark \ref{taut disp}). 
\end{prop}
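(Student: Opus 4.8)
The plan is to verify the four items that, by Remark \ref{taut disp}, describe a $1$-display over the tautological frame $\underline{A}$: (i) a finite projective $A$-module $M=D(G)$ with a filtration by direct summands $D(G)=F^{0}\supseteq F^{1}D(G)\supseteq F^{2}=0$ whose graded pieces are projective; (ii) and (iii) two $\sigma$-linear maps $F_{0},F_{1}$ satisfying $pF_{1}=F_{0}|_{F^{1}D(G)}$; and (iv) the generation condition that $\im(F_{0})+\im(F_{1})=D(G)$. I set $F_{0}:=F$.

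First I would settle the linear algebra. Since $\bD(G)$ is a crystal in finite locally free modules on $\NilCrys(\Spf(A)/\Spf(\bZ_{p}))$ (\cite[Chapter IV]{me}), the value $D(G)=\bD(G)_{A}$ is finite projective, with reduction the Dieudonn\'e module $M_{0}:=D(G)\otimes_{A}A/pA=\bD(G_{0})_{A/pA}$. As $\Lie(G)$ is finite locally free, the surjection $D(G)\twoheadrightarrow\Lie(G)$ splits, so $F^{1}D(G)$ is a direct summand with projective quotient $\Lie(G)$; this gives item (i).

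Next I would construct $F_{1}$. Exactly as $F$ was produced from the Verschiebung $G_{0}^{(p)}\to G_{0}$, applying $\bD$ to the relative Frobenius $G_{0}\to G_{0}^{(p)}$ yields a $\sigma^{-1}$-linear map $V\colon D(G)\to D(G)$, and since the two composites of Frobenius and Verschiebung equal $[p]$ on $G_{0}$, one gets $FV=VF=p$ on $D(G)$. By the standard compatibility of the covariant Dieudonn\'e crystal with the Hodge filtration, the latter is the image of the Verschiebung: $\overline{F^{1}D(G)}=\im(\bar{V})$ in $M_{0}$ (equivalently $\Lie(G)=\mathrm{coker}(\bar{V})$). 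Because $p=0$ in $A/pA$ forces $\bar{F}\bar{V}=0$, the map $\bar{F}$ kills $\overline{F^{1}D(G)}$, i.e.\ $F(F^{1}D(G))\subseteq pD(G)$. As $A$ is $p$-torsion free, I may then define the $\sigma$-linear map $F_{1}:=p^{-1}F|_{F^{1}D(G)}$; by construction $pF_{1}=F_{0}|_{F^{1}D(G)}$, which is items (ii) and (iii) (the remaining $F_{i}$ being $0$ for $i\geq2$ and multiples of $F_{0}$ for $i\leq -1$).

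The remaining and, I expect, hardest point is the generation condition (iv). Since $p\in\Rad(A)$ and everything is finite projective, by Nakayama it suffices to prove $\im(\bar{F})+\bar{F}_{1}(\overline{F^{1}D(G)})=M_{0}$ over $A/pA$. Here I would exploit $FV=p$ together with $\im(\bar{V})=\overline{F^{1}D(G)}$ as follows: given $x\in M_{0}$, lift it to $\tilde{x}\in D(G)$ and pick $w\in F^{1}D(G)$ reducing to $\bar{V}x\in\im(\bar{V})$, so that $w=V\tilde{x}+pu$ for some $u\in D(G)$; then $F(w)=FV\tilde{x}+pF(u)=p(\tilde{x}+F(u))$, hence $F_{1}(w)=\tilde{x}+F(u)$ and, modulo $p$, $\bar{F}_{1}(\bar{V}x)\equiv x\pmod{\im(\bar{F})}$. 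Thus every $x$ lies in $\bar{F}_{1}(\overline{F^{1}D(G)})+\im(\bar{F})$, giving (iv); equivalently, the linearization of $(F_{0},F_{1})$ for a decomposition $D(G)=T\oplus F^{1}D(G)$ is a surjection of finite projective $A$-modules of the same rank $\mathrm{rk}\,D(G)$, hence an isomorphism. I expect this step to be the main obstacle: it is the only place where $p$-divisibility and the Frobenius--Verschiebung relation are genuinely used, and the subtlety is that $A/pA$ need not be reduced, so one cannot reduce to a field but must run the above mod-$p$ computation, anchored on the identity $\overline{F^{1}D(G)}=\im(\bar{V})$.
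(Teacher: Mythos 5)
Your overall structure matches the paper's (the same four items must be checked, and the paper likewise reduces the generation condition to a mod-$p$ statement via Nakayama), but your treatment of the two substantive steps takes a genuinely different route. The paper proves $F(F^1D(G))\subseteq pD(G)$ by observing that $F$ restricted to the Hodge filtration is induced, via functoriality, by the relative Frobenius of $G_0^{\ast}$, which is zero on Lie algebras; and it proves generation by reducing to a perfect residue field (using that the Frobenius lift gives $A\to W(A)$, hence $A\to W(k)$, by Lazard), where the classical identity $V(D(G))=F^{-1}(pD(G))$ and a dimension count from Demazure finish the job. You instead stay over $A/pA$ and run both steps off the single identity ``$\mathrm{image\ of\ Verschiebung}=\mathrm{Hodge\ filtration}$'' together with $FV=p$. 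Your mod-$p$ computation for generation is correct and arguably cleaner than the reduction to fields; but be aware that your key input is not a formal ``compatibility'': the statement that $\Ker(\bar F^{\sharp})=\im(\bar V^{\sharp})$ coincides with the Hodge filtration for a $p$-divisible group over an arbitrary base of characteristic $p$ is a genuine theorem of Berthelot--Breen--Messing and must be cited precisely (it is exactly the content your argument consumes, in both inclusions). By contrast, the paper only ever uses this identification over a perfect field, where it is elementary. Also note that your remark about non-reducedness of $A/pA$ blocking a reduction to fields is misplaced: Nakayama lets one check surjectivity after base change to residue fields regardless of nilpotents, and that is precisely what the paper does.

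One point needs repair before your argument is literally correct. Over a non-perfect base the Frobenius lift $\sigma$ is not bijective, so there is no ``$\sigma^{-1}$-linear map $V\colon D(G)\to D(G)$''; the relative Frobenius of $G_0$ induces an $A$-linear map $V^{\sharp}\colon D(G)\to D(G)^{\sigma}$ with $F^{\sharp}\circ V^{\sharp}=p$, and the image of its reduction is the \emph{Frobenius twist} $(\overline{F^1D(G)})^{(p)}$ inside $M_0^{(p)}$, not $\overline{F^1D(G)}$ inside $M_0$. Your displayed identity ``$\overline{F^1D(G)}=\im(\bar V)$ in $M_0$'' is therefore not well-formed as written. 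The argument survives verbatim once rewritten with linearizations: lift $\bar V^{\sharp}x$ to $w\in (F^1D(G))^{\sigma}$, write $w=V^{\sharp}\tilde x+pu$ in $D(G)^{\sigma}$ (using $p$-torsion freeness), and conclude $x=\bar F_1^{\sharp}(\bar w)-\bar F^{\sharp}(\bar u)$. With that correction and a precise reference for the Berthelot--Breen--Messing identification, your proof is complete.
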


\begin{proof}
It suffices to prove that $F(F^1D(G))$ is contained in $pD(G)$, and $D(G)$ is generated by $F(D(G))$ and $p^{-1}F(F^1D(G))$. 

We consider the following $A/pA$-linear map 
\[
F_0: (\bD(G_0)_{A/pA})^{(p)}=D(G)^{\sigma}\otimes A/pA \to D(G)\otimes A/pA =\bD(G_0)_{A/pA}.   
\]
We consider the following diagram.
\[
\xymatrix{
\mathrm{Lie}((G_0^{(p)})^{\ast})^{\ast} \ar@{=}[r] \ar[d] & F^1D(G)^{\sigma} \otimes A/pA \ \ar@{^{(}->}[r] & D(G)^{\sigma}\otimes A/pA \ar[d]^{F_0}
\\ \mathrm{Lie}(G_0^{\ast})^{\ast} \ar@{=}[r] & F^1D(G)\otimes A/pA \  \ar@{^{(}->}[r] & D(G)\otimes A/pA
}
\]
In this diagram, the left vertical map is zero because this map is induced by the relative Frobenius morphism $G_0^{\ast}\to (G_{0}^{\ast})^{(p)}=(G_0^{(p)})^{\ast}$. Thus $F(F^1D(G))\subset pD(G)$.

Let us fix an $A$-submodule $L$ of $D(G)$ such that $D(G)=L\oplus F^1D(G)$.
The $\sigma$-linear maps $F|_L: L\to D(G)$ and $p^{-1}F: F^1D(G)\to D(G)$ induce a $\sigma$-linear map 
\[
 \widetilde{F}: D(G)=L\oplus F^1D(G)\to D(G).   
\]
It suffices to show that $\widetilde{F}$ induces a surjection of $A$-modules $D(G)^{\sigma}\to D(G)$. By Nakayama's lemma, it suffices to prove that, for an arbitrary field $k$ and an arbitrary ring morphism $A/pA\to k$, the Frobenius-linear map $\widetilde{F}\otimes_{A}k$ induces a surjective morphism of $k$-vector spaces $D(G)^{\sigma}\otimes_A k\to D(G)\otimes_A k$. We may assume that $k$ is a perfect field.
Since $A$ is $p$-torsion free, the Frobenius lift $\sigma : A \to A$
induces a ring homomorphism $A \to W(A)$ by \cite[VII, Proposition 4.12]{lazard}.
Thus the composite of $A\to A/pA\to k$ induces a ring homomorphism $A\to W(k)$ commuting with Frobenius lifts. Taking base change by $A\to W(k)$, it suffices to consider the case where $A=W(k)$ and $\sigma$ is the Witt vector Frobenius.

In this case, $D(G)$ is the covariant Dieudonn\'e module of the $p$-divisible group $G_0$ over $k$. We have a $\sigma^{-1}$-linear map $V:D(G)\to D(G)$ satisfying $V(D(G))=F^{-1}(pD(G))$. Therefore $pL\oplus F^1D(G)$ is contained in $V(D(G))$. Since 
\begin{align*}
\dim_k((pL\oplus F^1D(G))/pD(G))&=\dim_k(F^1D(G)/pF^1D(G))
\\ &=\dim_k((\Lie(G_{0}^{\ast}))^{\ast})
\\ &=\dim G^{\ast} \end{align*}
and
\begin{align*}
\dim_k(V(D(G))/pD(G))&=\dim_k(D(G)/F(D(G)))
\\ &=\dim G^{\ast} \end{align*}(the last equality follows from \cite[Chapter III, Section 9]{de}), we see that $pL\oplus F^1D(G)$ coincides with $V(D(G))$. Therefore, we have
\begin{align*}
 \im(\widetilde{F})&=F(L)+(p^{-1}F)(F^1D(G))
 \\ &=(p^{-1}F)(pL\oplus F^1D(G))
 \\ &=(p^{-1}F)(V(D(G)))
 \\ &=D(G).   
\end{align*}
\end{proof}

\begin{dfn}
The display defined in Proposition \ref{dd} is called the \emph{Dieudonn\'e display} associated to $G$ over the tautological frame $\underline{A}$. It is denoted by $\underline{D}(G)$.
\end{dfn}

\begin{prop} \label{duality over tf}
There exists a functorial isomorphism of displays
\[
  (\underline{D}(G))^{\ast}\stackrel{\sim}{\to} \underline{D}(G^{\ast}).
\]
\end{prop}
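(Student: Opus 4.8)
The plan is to deduce the isomorphism from crystalline Dieudonn\'e duality. By the duality theorem for Dieudonn\'e crystals (Berthelot--Breen--Messing), there is a canonical perfect pairing of crystals on $\NilCrys(\Spf(A)/\Spf(\bZ_p))$ relating $\bD(G)$ and $\bD(G^{\ast})$, functorial in $G$ and compatible with the Frobenius structures. Evaluating at the PD thickening $A \twoheadrightarrow A/pA$ (nilpotent since $p\geq 3$) yields a functorial perfect $A$-bilinear pairing $D(G)\times D(G^{\ast})\to A$, and hence an isomorphism of $A$-modules $D(G^{\ast})\stackrel{\sim}{\to}\mathrm{Hom}_A(D(G),A)=D(G)^{\ast}$. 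This will be the underlying map of $S$-modules (for $S=A[t]$) of the desired isomorphism $(\underline{D}(G))^{\ast}\stackrel{\sim}{\to}\underline{D}(G^{\ast})$, once I check that it respects the filtration and the Frobenius.

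For the Hodge filtration I would use the identifications $F^1 D(G)=\Lie(G^{\ast})^{\ast}$ and $F^1 D(G^{\ast})=\Lie(G)^{\ast}$ coming from the definition. The crystalline pairing is compatible with the Hodge filtration, so $F^1 D(G)$ and $F^1 D(G^{\ast})$ are exact annihilators of each other; concretely, the pairing restricts to the tautological evaluation pairing between $F^1 D(G)=\Lie(G^{\ast})^{\ast}$ and $D(G^{\ast})/F^1 D(G^{\ast})=\Lie(G^{\ast})$, and symmetrically on the other side. Therefore the submodule $F^1 D(G^{\ast})\subset D(G^{\ast})$ corresponds under the pairing to the annihilator of $F^1 D(G)$, which is exactly the filtration of the dual display $(\underline{D}(G))^{\ast}$ in the filtered description of Remark \ref{taut disp}.

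The Frobenius compatibility is the delicate point, and I expect it to be the main obstacle. Since the pairing is a pairing of crystals compatible with the crystalline (divided) Frobenius, and the maps $F$ on $\underline{D}(G)$ and $\underline{D}(G^{\ast})$ are induced from these, the compatibility should be formal in nature; the real work is to check that it matches the specific normalization of the dual display, namely that the dualized map $(F^{\ast})^{-1}$ agrees with the Frobenius of $\underline{D}(G^{\ast})$ after accounting for the divided Frobenius $F_1=p^{-1}F|_{F^1 D(G)}$ and the powers of $p$ built into the frame $\underline{A}$ (where $\sigma_{-1}(t)=p$). To pin down this bookkeeping I would reduce to the case $A=W(k)$, exactly as in the proof of Proposition \ref{dd}: for each perfect field $k$ and each map $A/pA\to k$, the Frobenius lift $\sigma$ induces a ring homomorphism $A\to W(k)$ commuting with Frobenius, and since $A$ is $p$-torsion free it suffices to treat $A=W(k)$ with $\sigma$ the Witt vector Frobenius.

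Over $W(k)$ the modules $D(G)$ and $D(G^{\ast})$ are the classical covariant Dieudonn\'e modules, the pairing is the classical perfect pairing realizing Cartier duality, and the required identity reduces to the standard adjunction $\langle Fx,y\rangle=\sigma\langle x,Vy\rangle$ between Frobenius and Verschiebung together with the relation $FV=VF=p$. This simultaneously confirms the Frobenius normalization and, via Nakayama's lemma, shows that the module map constructed above is an isomorphism, since its reduction over each $W(k)$ is one. Finally, functoriality of the resulting isomorphism of displays is inherited directly from the functoriality of the crystalline duality pairing in $G$.
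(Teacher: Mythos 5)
Your route is the same as the paper's: the paper's entire proof of this proposition is a citation of crystalline Dieudonn\'e duality, \cite[Proposition 5.3.6]{bbm}, and your description of the pairing, the exchange of Hodge filtrations, and the Frobenius normalization is an unwinding of what that reference supplies.

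The one step that does not work as written is your proposed verification of the Frobenius compatibility by reduction to $A=W(k)$ ``exactly as in the proof of Proposition \ref{dd}''. In Proposition \ref{dd} the statement being transported to the fibers is a surjectivity, which does descend by Nakayama's lemma. Here you need an \emph{identity} of $A$-linear maps between finite projective $A$-modules, and such an identity cannot in general be checked after the base changes $A\to W(k)$, because that family of homomorphisms need not be jointly injective. Concretely, for $A=\bZ_p[x]/(x^2)$ (which is $p$-adic and $p$-torsion free) with the Frobenius lift $\sigma(x)=px$, Lazard's construction sends $x$ to $(x,x,x,\dots)\in W(A)$, so every induced homomorphism $A\to W(k)$ kills $x$; multiplication by $x$ is then a nonzero $A$-linear map that vanishes under all of these base changes. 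The repair is to avoid the specialization entirely: \cite{bbm} proves the compatibility of the duality pairing with the Frobenius structures (and the Hodge filtration) of the Dieudonn\'e crystals over an arbitrary base, so the adjunction $\langle Fx,y\rangle=\sigma\langle x,Vy\rangle$ together with $FV=p$ can be invoked directly over $A$ to pin down the normalization of the dual display, with no reduction to $W(k)$ needed.
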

\begin{proof}
See \cite[Proposition 5.3.6]{bbm}.
\end{proof}

\subsection{Displays and $p$-divisible groups over Artin local rings}

Let $p\geq3$ and $R$ be as in Example \ref{2.4}. Let $G$ be a $p$-divisible group over $R$. We have a covariant Dieudonn\'e crystal $\bD(G)$ which is a crystal on the nilpotent crystalline site $\NilCrys(\Spec(R)/\Spf(\bZ_p))$. We set
\begin{align*}
    D(G)&:= \bD(G)_{(\widehat{W}(R)\twoheadrightarrow R)}
\\ F^1D(G)&:= \mathrm{Ker}(\bD(G)_{(\widehat{W}(R)\twoheadrightarrow R)}\twoheadrightarrow \bD(G)_R\twoheadrightarrow \Lie(G)).
\end{align*} The Witt vector Frobenius $\sigma: \widehat{W}(R)\to \widehat{W}(R)$, the Frobenius map $\Fr:R/pR\to R/pR$, and the Verschiebung map $(G\otimes R/pR)^{(p)}\to G\otimes R/pR$ induce the following morphisms of $\widehat{W}(R)$-modules: 
\begin{align*} D(G)^{\sigma}&=(\bD(G\otimes R/pR)_{(\widehat{W}(R)\twoheadrightarrow R/pR)})^{\sigma}
\\ &=(\Fr^{\ast}_{\crys}\bD(G\otimes R/pR))_{(\widehat{W}(R)\twoheadrightarrow R/pR)}
\\ &=\bD((G\otimes R/pR)^{(p)})_{(\widehat{W}(R)\twoheadrightarrow R/pR)}
\\ &\to \bD(G\otimes
 R/pR)_{(\widehat{W}(R)\twoheadrightarrow R/pR)}
 \\ &=D(G).
\end{align*} The $\sigma$-linear map induced by the composite of the above morphisms is denoted by $F:D(G)\to D(G)$.

\begin{prop}[Lau] \label{dd witt}
There is a unique $\sigma$-linear morphism $F_1:F^1D(G)\to D(G)$ which is functorial in $R$ and $G$ such that the quadruple
\[
 (D(G), F^1D(G), F, F_1)   
\]
is a $1$-display over the small Witt frame $\underline{\widehat{W}}(R)$ (see Remark \ref{2.12}). 
\end{prop}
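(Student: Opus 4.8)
The plan is to reduce the construction of $F_1$ to the $p$-torsion free situation of Proposition \ref{dd} by base change along a suitable frame morphism, and to use the crystalline base-change property of $\bD(G)$ to identify the resulting data. The essential difficulty is that $\widehat{W}(R)$ is in general \emph{not} $p$-torsion free, so one cannot define $F_1$ simply by ``dividing $F$ by $p$'' on $F^1D(G)$ as in the tautological case (see Remark \ref{taut disp}); the whole point of passing to a $p$-torsion free frame is to make this division canonical there and then transport it along base change.

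For existence I would proceed as follows. Choose a $p$-torsion free $p$-adic ring $A$ with a Frobenius lift $\sigma_A$ and a surjection $A\twoheadrightarrow R$ exhibiting $A$ as a nilpotent PD thickening of $R$ (for instance the PD envelope of a surjection from the $p$-adic completion of a polynomial $W(k)$-algebra, with $x_i\mapsto x_i^p$). Since $p$-divisible groups lift along nilpotent thickenings, pick a lift $\widetilde{G}$ of $G$ to $A$. Since $\widehat{W}(R)\twoheadrightarrow R$ is a PD thickening carrying the Frobenius lift $\sigma$, choose a ring homomorphism $f\colon A\to \widehat{W}(R)$ over $R$ that is compatible with the Frobenius lifts (possible using the formal smoothness of $A$ over $W(k)$ to adjust generators); by Remark \ref{2.5} it induces a frame morphism $\underline{A}\to \underline{\widehat{W}}(R)$. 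Now base change the $1$-display $\underline{D}(\widetilde{G})$ of Proposition \ref{dd} along this frame morphism. By the base-change compatibility of the Dieudonn\'e crystal along the PD morphism $f$, the underlying module is $D(\widetilde{G})\otimes_A \widehat{W}(R)=\bD(\widetilde{G})_A\otimes_A \widehat{W}(R)\cong \bD(G)_{(\widehat{W}(R)\twoheadrightarrow R)}=D(G)$, the Hodge filtration base-changes to $F^1D(G)$, and the crystal Frobenius matches the map $F$ defined before the Proposition because $f$ intertwines $\sigma_A$ and $\sigma$. Via Remark \ref{2.12} this base-changed $1$-display is a quadruple $(D(G), F^1D(G), F, F_1)$, which yields the desired $F_1$.

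For uniqueness and functoriality the key is again the reduction to the $p$-torsion free frame $\underline{A}$, where $F_1$ is forced. Over $\underline{A}$ the display axiom $pF_{1}=F|_{F^1}$ determines the map $F_{1,A}$ uniquely since $A$ is $p$-torsion free. As the base change along $\underline{A}\to\underline{\widehat{W}}(R)$ is $\sigma$-linear and $F^1D(\widetilde{G})\otimes_A\widehat{W}(R)\xrightarrow{\sim} F^1D(G)$, the values $F_1(x\otimes w)=F_{1,A}(x)\otimes\sigma(w)$ on a generating set determine $F_1$ completely. Hence any assignment $G\mapsto F_1$ that is functorial, in the sense of being compatible with base change of displays along frame morphisms arising from the input data, must agree with the base change of $F_{1,A}$; in particular it is independent of the auxiliary choices $(A,\widetilde{G},f)$ and is uniquely determined. (On the subobject $\hat{I}(R)D(G)$ the map is in any case already pinned down by the display relation $F_1(V(w)x)=wF(x)$; the content of the argument is the determination on the remaining ``Hodge'' directions $F^1D(G)/\hat{I}(R)D(G)$, which is exactly what the lift to $\underline{A}$ supplies.)

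The step I expect to be the main obstacle is the integral construction itself, i.e.\ producing $F_1$ on all of $F^1D(G)$ — not merely on $\hat{I}(R)D(G)$ — together with the generation condition that $F_1(F^1D(G))$ generate $D(G)$, in the presence of $p$-torsion in $\widehat{W}(R)$. Both are inherited from Proposition \ref{dd} through base change, but only after checking the compatibilities underlying the reduction: the existence of the lift $\widetilde{G}$, the arrangement of $f$ as a PD morphism over $R$ commuting with the Frobenius lifts, and the identification of the base-changed Dieudonn\'e crystal with $D(G)$ as filtered modules with Frobenius. Verifying these (especially the simultaneous PD- and Frobenius-compatibility of $f$) is the technically most delicate point, though each is standard within the Zink--Lau framework.
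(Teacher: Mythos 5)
Your overall strategy --- produce $F_1$ by descending the $1$-display of Proposition \ref{dd} from a $p$-torsion free tautological frame along a Frobenius-compatible frame morphism into $\underline{\widehat{W}}(R)$ --- is the right one; the paper itself gives no proof (it cites \cite[Proposition 3.17]{lau1}), but this is exactly the mechanism it uses for the neighbouring Proposition \ref{duality over alr} and for the $K3$ display. The execution, however, has a genuine gap in the choice of the auxiliary ring $A$. You require $A\twoheadrightarrow R$ to be a surjective nilpotent PD thickening with $A$ $p$-torsion free, and you propose the ($p$-adically completed) PD envelope of a polynomial presentation of $R$. For a general Artin local $R$ the kernel of such a presentation is not generated by a regular sequence and the PD envelope need not be $p$-torsion free --- this is precisely the obstruction to the Nygaard--Ogus method that the introduction of this paper recalls --- and a PD envelope is not formally smooth over $W(k)$, so the step where you invoke formal smoothness to make $f\colon A\to \widehat{W}(R)$ commute with the Frobenius lifts has nothing to act on. The repair is to notice that neither surjectivity of $A\to R$ nor a PD structure on its kernel is ever used: the source object $(\Spf A = \Spf A)$ of the crystalline site has zero PD ideal, so any lift $f$ of $A\to R$ is automatically a morphism of thickenings, and all you need is a $p$-torsion free $A$ with Frobenius lift, a $p$-divisible group over $A$ pulling back to $G$, and a Frobenius-equivariant $f$. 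The universal deformation ring $A=W(k)[[t_1,\dots,t_d]]$ of $G\otimes_R k$, with $\sigma(t_i)=t_i^p$ and $f(t_i)=[x_i]$, supplies all of this at once, as in the proof of Proposition \ref{duality over alr}; with that substitution your existence argument goes through.

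The uniqueness argument is a second, independent gap: it is circular as written. ``Functorial in $R$ and $G$'' means compatible with morphisms of $p$-divisible groups and with base change along morphisms of Artin local rings; it does not include compatibility with base change along the auxiliary frame morphisms $\underline{A}\to\underline{\widehat{W}}(R)$, whose source is not an Artin local ring, and that stronger compatibility is exactly what your argument assumes of an arbitrary competitor $F_1'$. The assumption is not harmless, because for a fixed pair $(R,G)$ the map $F_1$ is genuinely non-unique: any two candidates agree on $\hat{I}(R)D(G)$ (forced by the display axiom $F_1(V(w)x)=wF(x)$) and have the same $pF_1=F|_{F^1D(G)}$, so they differ by a $\sigma$-linear map from $F^1D(G)/\hat{I}(R)D(G)\cong\Lie(G^{\ast})^{\ast}$ into the $p$-torsion of $D(G)$, and $\widehat{W}(R)$ has abundant $p$-torsion (already for $R=k[\epsilon]/(\epsilon^2)$). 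Uniqueness must therefore be extracted from naturality in $(R,G)$ alone, and your reduction does not do this; this is the part of \cite[Proposition 3.17]{lau1} that remains to be supplied.
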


\begin{proof}
See \cite[Proposition 3.17]{lau1}.
\end{proof}

\begin{dfn}
The display defined in Proposition \ref{dd witt} is called the \emph{Dieudonn\'e display} associated to $G$ over the small Witt frame $\underline{\widehat{W}}(R)$. It is denoted by $\underline{D}(G)$.
\end{dfn}

\begin{prop} \label{duality over alr}
There exists a functorial isomorphism of $\underline{\widehat{W}}(R)$-displays \begin{align*}
 (\underline{D}(G))^{\ast}\stackrel{\sim}{\to} \underline{D}(G^{\ast}).   
\end{align*}
\end{prop}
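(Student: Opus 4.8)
The plan is to build the isomorphism out of the crystalline duality of covariant Dieudonn\'e crystals and then to reduce the verification that it respects the display structure to the $p$-torsion free case already settled in Proposition \ref{duality over tf}. First I would invoke the Berthelot--Breen--Messing duality \cite{bbm}, which furnishes a functorial perfect pairing of crystals $\bD(G)\otimes\bD(G^{\ast})\to\cO$ on $\NilCrys(\Spec(R)/\Spf(\bZ_p))$. Evaluating at the PD thickening $\widehat{W}(R)\twoheadrightarrow R$, which is nilpotent because $p\geq3$, yields a perfect $\widehat{W}(R)$-bilinear pairing $D(G)\otimes_{\widehat{W}(R)}D(G^{\ast})\to\widehat{W}(R)$, hence an isomorphism of finite projective $\widehat{W}(R)$-modules $D(G^{\ast})\xrightarrow{\sim}D(G)^{\ast}$ that is functorial in $R$ and $G$. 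Compatibility with the Hodge filtrations should be immediate from the construction: the crystalline pairing is compatible with the surjections onto $\Lie(G)$ and $\Lie(G^{\ast})$, so $F^{1}D(G^{\ast})$ is carried onto the annihilator of $F^{1}D(G)$, which is precisely the submodule entering the dual display $\underline{D}(G)^{\ast}$. What remains is to show that the pairing intertwines the operators, i.e.\ that it matches $F$ on $D(G^{\ast})$ with $(F^{\ast})^{-1}$ and is compatible with the divided Frobenii $F_{1}$.

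For this last point I would pass to a $p$-torsion free lift and invoke Proposition \ref{duality over tf}. Since $R$ is Artinian with perfect residue field $k$, choose a surjection $A\twoheadrightarrow R$ from a $p$-torsion free $p$-adic ring $A$ with a Frobenius lift $\sigma$ and $p\in\Rad(A)$, for instance $A=W(k)[[x_{1},\dots,x_{n}]]$ with $\sigma(x_{i})=x_{i}^{p}$. The canonical Frobenius-equivariant homomorphism $A\to W(A)$ of \cite[VII, Proposition 4.12]{lazard} (already used in the proof of Proposition \ref{dd}), composed with $W(A)\to W(R)$ and the projection $W(R)\to\widehat{W}(R)$, gives a Frobenius-compatible ring homomorphism $A\to\widehat{W}(R)$ lifting $A\to R$, hence by Remark \ref{2.5} a morphism of frames $\underline{A}\to\underline{\widehat{W}}(R)$. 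As $R$ is Artinian, the kernel $J$ of $A\to R$ satisfies $\sqrt{J}=\mathfrak{m}_{A}$, so $A=\varprojlim_{m}A/J^{m}$ with each $A/J^{m+1}\to A/J^{m}$ a square-zero thickening; by the formal smoothness of the deformation functor of $p$-divisible groups (Grothendieck--Messing, \cite{me}) I can lift $G$ successively to a $p$-divisible group $\tilde{G}$ over $A$ with $\tilde{G}\otimes_{A}R\cong G$. Over the tautological frame $\underline{A}$, Proposition \ref{duality over tf} then provides an isomorphism of displays $\underline{D}(\tilde{G})^{\ast}\xrightarrow{\sim}\underline{D}(\tilde{G}^{\ast})$ induced by the very same crystalline pairing.

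It then remains to identify the base change $\underline{D}(\tilde{G})\otimes_{\underline{A}}\underline{\widehat{W}}(R)$ with $\underline{D}(G)$, and likewise for the dual, compatibly with the pairing. On underlying modules this is the base-change compatibility of the covariant Dieudonn\'e crystal along $A\to\widehat{W}(R)$; on the operators the uniqueness clause of Proposition \ref{dd witt} forces the base-changed $F_{1}$ to coincide with that of $\underline{D}(G)$. Granting this, functoriality of the Berthelot--Breen--Messing pairing shows that the pairing over $\underline{\widehat{W}}(R)$ is the base change of the one over $\underline{A}$; since base change preserves isomorphisms of displays and the latter is an isomorphism by Proposition \ref{duality over tf}, so is the former, which proves the claim.

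The step I expect to be the main obstacle is this last identification together with the control of the divided Frobenius $F_{1}$ over $\widehat{W}(R)$, which is not $p$-torsion free. The delicate point is that the crystalline evaluation at $\widehat{W}(R)\twoheadrightarrow R$ and the base change from $A$ along $A\to\widehat{W}(R)$ must be matched even when $pR\neq0$: one has to feed the two thickenings $\widehat{W}(R)\twoheadrightarrow\widehat{W}(R)/p\widehat{W}(R)$ and $\widehat{W}(R)\twoheadrightarrow R$ into the same Dieudonn\'e-crystal data and check that the resulting $F$ and $F_{1}$ are the ones defining $\underline{D}(G)$. Carrying out this comparison carefully is where the real work lies; alternatively, one may bypass it by appealing directly to Lau's duality for Dieudonn\'e displays in \cite{lau1}.
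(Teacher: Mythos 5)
Your proposal is correct and follows essentially the same route as the paper: both reduce to the $p$-torsion-free case of Proposition \ref{duality over tf} by producing a morphism of frames from a tautological frame over a power-series ring to $\underline{\widehat{W}}(R)$ and then base-changing the duality isomorphism, the only difference being that the paper works with the universal deformation of $G_0$ (so the lift of $G$ comes for free) where you use an arbitrary presentation $A\twoheadrightarrow R$ plus Grothendieck--Messing lifting, and both arguments leave the same point implicit, namely that $\underline{D}(\tilde{G})\otimes_{\underline{A}}\underline{\widehat{W}}(R)\cong\underline{D}(G)$, which as you note is forced by the uniqueness clause in Proposition \ref{dd witt}. One small correction: $\widehat{W}(R)$ is a subring of $W(R)$, not a quotient, so there is no projection $W(R)\to\widehat{W}(R)$; the composite $A\to W(A)\to W(R)$ nevertheless lands in $\widehat{W}(R)$, since on $W(k)[[x_1,\dots,x_n]]$ with $\sigma(x_i)=x_i^p$ it sends $x_i$ to the Teichm\"uller lift $[x_i]$ of a nilpotent element, which is exactly the frame morphism the paper uses.
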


\begin{proof}
Let $k$ be the residue field of $R$ and $G_0:=G\otimes_R k$. By \cite[Proposition 3.11]{lau1}, we have the universal deformation of $G_0$
\[
\sG\to \Spf(W(k)[[t_1,\ldots,t_d]])
\]
where $d:=\dim(G_0)\  \cdot\ \dim(G_{0}^{\ast})$. We put $A:=W(k)[[t_1,\ldots,t_d]]$ and $A_n:=A/(t_{1}^n,\ldots,t_{d}^n)$ for $n\geq1$. Let $\sigma:A\to A$ be a Frobenius lift such that $\sigma(t_i)=t_{i}^p$ for all $i$ and such that $\sigma$ coincides with the Witt vector Frobenius on $W(k)$. The Frobenius lift $\sigma$ defines tautological frames $\underline{A}$ and $\underline{A_n}$. We have the unique morphism of local $W(k)$-algebras $A\to R,(t_i\mapsto x_i)$ corresponding to $G/R$. The morphism of local $W(k)$-algebras $A\to \widehat{W}(R)$ which sends $t_i$ to $[x_i]$ for all $i$ induces a morphism of frames $\underline{A}\to \underline{\widehat{W}}(R)$. Here, $[x_i]$ denotes the Teichm\"uller lift of $x_i$. Since $R$ is an Artin local ring, the map $A\to R$ (resp.\ $\underline{A}\to \underline{\widehat{W}}(R)$) factors as $A\twoheadrightarrow A_{m}\to R$ (resp.\ $\underline{A}\to \underline{A_m}\to \underline{\widehat{W}}(R)$) for some $m\geq1$. Then, by taking base change of the isomorphism of displays over $\underline{A_m}$ in Proposition \ref{duality over tf}
\[
\underline{D}(\sG\otimes_A A_m)^{\ast} \stackrel{\sim}{\to} \underline{D}((\sG\otimes_A A_m)^{\ast})
\]
by $\underline{A_m}\to \underline{\widehat{W}}(R)$, we get the desired isomorphism.
\end{proof}

The following theorem is a main theorem of display-theoretic Dieudonn\'e theory proved by Lau (see \cite{lau1} for details).

\begin{thm}[Lau] \label{dieudonne theory}
The functor 
\begin{align*}
  (p\text{-divisible groups over}\ R) \to (1\text{-displays over} \ \underline{\widehat{W}}(R))  
\end{align*}
defined by 
\begin{align*}
 G\mapsto \underline{D}(G)   
\end{align*} gives an equivalence of categories.
\end{thm}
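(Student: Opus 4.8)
The plan is to deduce the equivalence from classical Dieudonn\'e theory over the residue field by propagating along small thickenings, matching the crystalline deformation theory of $p$-divisible groups with the deformation theory of $1$-displays. First I would treat the base case $R=k$ with $k$ perfect, which is the case relevant to the applications. Under the identification of Remark \ref{2.12}, the functor $G\mapsto \underline{D}(G)$ over a perfect field $k$ is the classical covariant Dieudonn\'e functor, sending a $p$-divisible group to its Dieudonn\'e module with operators $F$ and $F_1$ (the latter being $V^{-1}$ on the relevant submodule). Since $k$ is perfect, $\widehat{W}(k)=W(k)$ and $\hat{I}(k)=pW(k)$, so $1$-displays over $\underline{\widehat{W}}(k)$ are precisely Dieudonn\'e modules, and the classical theory gives the desired equivalence. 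This provides the base of an induction on the length of $R$.

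Next I would choose a surjection $R\twoheadrightarrow R_0$ of Artin local rings whose kernel $\mathfrak{a}$ satisfies $\mathfrak{m}_R\mathfrak{a}=0$; then $\mathfrak{a}^2=0$, the induced surjection $\widehat{W}(R)\twoheadrightarrow \widehat{W}(R_0)$ is a nilpotent PD thickening (using $p\geq3$), and we obtain a frame morphism $\underline{\widehat{W}}(R)\to \underline{\widehat{W}}(R_0)$. By the inductive hypothesis the functor is an equivalence over $R_0$, so it suffices to match liftings across this single step.

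The heart of the argument is to identify two deformation theories. On the geometric side, Grothendieck--Messing theory classifies deformations of $G_0/R_0$ to $R$ by liftings of the Hodge filtration $F^1D(G_0)$ to a direct summand of $D(G_0)\otimes_{\widehat{W}(R_0)}\widehat{W}(R)=\bD(G_0)_{\widehat{W}(R)}$, compatibly on morphisms. On the display side, deformations of the $1$-display $\underline{D}(G_0)$ along $\underline{\widehat{W}}(R)\to \underline{\widehat{W}}(R_0)$ are classified by liftings of the submodule $Q$, equivalently of the Hodge filtration $P/Q$. Because the display module $D(G)$ is the evaluation of the crystal $\bD(G)$ on $\widehat{W}(R)$, with $F$ crystalline and $F_1$ determined functorially by Proposition \ref{dd witt}, the functor $\underline{D}$ carries the Grothendieck--Messing period map to the display deformation map. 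This yields a bijection on isomorphism classes of deformations, giving essential surjectivity in the inductive step; applying the same crystalline comparison to $\mathrm{Hom}$-groups and automorphisms, again reduced to the residue field, gives full faithfulness. A standard deformation-theoretic principle then upgrades these step-by-step bijections to an equivalence over $R$.

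The main obstacle is precisely the compatibility in this last step: proving that the crystalline torsor of Hodge-filtration liftings and the display-theoretic torsor of liftings of $Q$ are canonically identified by $\underline{D}$, not merely on underlying modules but compatibly with all the structure, and in particular that the divided Frobenius $F_1$ matches the crystalline datum. Pinning this down rigorously, together with the functoriality and uniqueness statements of Propositions \ref{dd witt} and \ref{duality over alr}, is the technical core of the argument.
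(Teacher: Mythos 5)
The paper does not prove this statement: it is quoted as \cite[Theorem 3.19]{lau1}, so there is no internal argument to compare against. Your outline is the natural one and is close in spirit to how the result is established in the literature (reduce to the perfect residue field, then match Grothendieck--Messing theory with a deformation theory of displays along small extensions $R\twoheadrightarrow R_0$ with $\mathfrak{m}_R\mathfrak{a}=0$). The base case over $k$ and the observation that $\widehat{W}(\mathfrak{a})$ is a square-zero ideal with nilpotent divided powers (using $p\geq 3$) are correct.

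However, as a proof the proposal has a genuine gap at its load-bearing step: you assume that deformations of the $1$-display $\underline{D}(G_0)$ along $\underline{\widehat{W}}(R)\to\underline{\widehat{W}}(R_0)$ are classified by liftings of $Q$, i.e.\ that the frame morphism is ``crystalline'' in the sense that the pair $(P,F,F_1)$ lifts uniquely and only the Hodge filtration is free. This is not a formality; it is itself one of the main theorems of the theory. Zink's original unique-lifting argument works for $V$-nilpotent (formal) displays, and the whole point of Dieudonn\'e displays over $\widehat{W}(R)$ --- and of Lau's contribution --- is to extend this to arbitrary $p$-divisible groups, including the \'etale and multiplicative parts, where the nilpotence hypothesis fails. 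Your sketch silently invokes exactly the statement that is hardest to prove. Two further points: the expression $D(G_0)\otimes_{\widehat{W}(R_0)}\widehat{W}(R)$ is ill-formed, since the ring map goes $\widehat{W}(R)\twoheadrightarrow\widehat{W}(R_0)$ and not the other way --- the object you want is the crystal value $\bD(G_0)_{(\widehat{W}(R)\twoheadrightarrow R_0)}$, and identifying it with the underlying module of any display deformation is part of what must be shown; and the compatibility of the divided Frobenius $F_1$ with the Grothendieck--Messing torsor, which you correctly flag as the technical core, is left entirely open. So the architecture is right, but the two ingredients you treat as available (display-side deformation theory for non-formal groups, and the $F_1$ matching) together constitute essentially the whole content of Lau's theorem.
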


\begin{proof}
See \cite[Theorem 3.19]{lau1}.
\end{proof}

\subsection{Displays and $p$-adic formal schemes over $p$-torsion free bases with Frobenius lifts}
\label{subsection:2.5}
Let $A$ and $\sigma:A\to A$ be as in Remark \ref{taut disp}. Moreover, we assume that $A$ is Noetherian and $p\geq3$. Let $\sX$ be a $p$-adic proper smooth formal scheme over $\Spf(A)$ satisfying the following conditions:
\begin{itemize}
    \item The Hodge-de Rham spectral sequence associated to $\sX/A$
    \begin{align*}
       E_{1}^{i,j}=H^j(\sX, \Omega_{\sX/A}^i)\Rightarrow H^{i+j}_{\dR}(\sX/A) 
    \end{align*}
    degenerates at $E_1$-page. 
    \item For $i,j\geq0$, $H^j(\sX, \Omega_{\sX/A}^i)$ is a finite projective $A$-module. It follows from this that $H^j(\sX, \Omega_{\sX/A}^i)$ is compatible with base change with respect to $A$.
\end{itemize}
We put $X_0:=\sX\otimes A/pA$.

When $\sX$ is projective over $A$, the following result is proved by Langer-Zink (see \cite[Theorem 5.5]{lz1}).

 \begin{prop} \label{str div}
 Let
\[
     H^{n}_{\dR}(\sX/A)\supset F^1H^{n}_{\dR}(\sX/A)\supset F^2H^{n}_{\dR}(\sX/A)\supset \cdots
\]
be the Hodge filtration. Let
\[ F: H^{n}_{\dR}(\sX/A)\to H^{n}_{\dR}(\sX/A) \]
be a $\sigma$-linear map induced by $\sigma$ and the identification $H^{n}_{\dR}(\sX/A)\cong H^{n}_{\crys}(X_0/A)$. We assume $p>n$. Then the triple 
\[
  (H^{n}_{\dR}(\sX/A), F^{\bullet}H^{n}_{\dR}(\sX/A), F)  
\]
is an $n$-display over the tautological frame $\underline{A}$ (see Remark \ref{taut disp}).
\end{prop}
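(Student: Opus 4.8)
The plan is to reduce the statement to an application of the equivalence between $\underline{A}$-displays and the filtered-Frobenius data described in Remark \ref{taut disp}. According to that remark, producing an $n$-display over $\underline{A}$ amounts to exhibiting, in addition to the finite projective module $M=H^{n}_{\dR}(\sX/A)$ with its Hodge filtration $F^{\bullet}M$, a family of $\sigma$-linear divided Frobenii $F_i: F^iM\to M$ satisfying $pF_{i+1}=F_i|_{F^{i+1}M}$, such that the images of the $F_i$ generate $M$, with $F^iM=0$ for $i\geq n+1$ and $F^iM=M$ for $i\leq0$. The hypotheses (Hodge--de Rham degeneration and projectivity of the Hodge bundles) guarantee that $M$ is finite projective and that $F^iM$ and $M/F^iM$ are direct summands, so the filtration is by direct summands as required; the degeneration also forces $F^iM=0$ for $i>n$ and $F^0M=M$, giving the numerical constraints for an $n$-display.

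First I would construct the divided Frobenii $F_i$. The essential input is the crystalline interpretation $H^{n}_{\dR}(\sX/A)\cong H^{n}_{\crys}(X_0/A)$ together with the divisibility properties of crystalline Frobenius on the Hodge filtration. Working modulo $p$ one has the relative Frobenius $X_0\to X_0^{(p)}$, and Mazur's theorem (in the strong form established by Langer--Zink and Ogus under the assumption $p>n$) asserts that the crystalline Frobenius $F$ carries $F^iH^{n}_{\crys}$ into $p^i H^{n}_{\crys}$, and moreover that the induced maps $F_i:=p^{-i}F|_{F^iM}$ are well defined and generate the whole module. The inequality $p>n$ is exactly what is needed to make the divided powers $\gamma_i$ appearing in the comparison integral and to control the denominators, so that $p^{-i}F$ genuinely lands in $M$ rather than in $M[1/p]$. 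I would invoke \cite[Theorem 5.5]{lz1} in the projective case to obtain these $F_i$ and the generation statement directly, and then check that $F_0=F$ and the chain relation $pF_{i+1}=F_i|_{F^{i+1}M}$ hold by construction.

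The remaining point is to remove the projectivity hypothesis of \cite[Theorem 5.5]{lz1}, since the proposition only assumes $\sX$ proper smooth. Here I would argue that the construction of the $F_i$ and the verification of the display axioms are local statements on the Hodge cohomology groups and on the crystalline Frobenius, and that under the stated hypotheses---degeneration of the Hodge--de Rham spectral sequence and projectivity of all $H^j(\sX,\Omega^i_{\sX/A})$---all the relevant cohomology groups are finite projective $A$-modules compatible with base change. Consequently the divisibility $F(F^iM)\subset p^iM$ and the generation property can be checked after base change to the residue fields of $A$, reducing to the case of a $p$-divisible-group-like or Dieudonn\'e-module computation over a perfect field exactly as in the proof of Proposition \ref{dd}. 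Thus the projectivity assumption in \cite{lz1} can be weakened to the two bulleted conditions above without changing the argument.

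The main obstacle I anticipate is precisely the passage from projective to merely proper smooth $\sX$: one must be careful that the comparison isomorphism $H^{n}_{\dR}(\sX/A)\cong H^{n}_{\crys}(X_0/A)$ and the divided-Frobenius estimates, which Langer--Zink prove using an explicit projective resolution or a de Rham--Witt computation, continue to hold when $\sX$ is only proper. The cleanest route is to observe that both sides of the comparison and the Frobenius structure are functorial and commute with the base changes $A\to A/\mathfrak{m}$ to residue fields; since the display axioms (divisibility and generation, checked via Nakayama as in Proposition \ref{dd}) are insensitive to the difference between projective and proper once finite projectivity of the Hodge groups is granted, the general case follows. I would therefore present the proof as: invoke the projective case of \cite[Theorem 5.5]{lz1} for the formal structure, then extend to the proper case by base change to residue fields and Nakayama's lemma, emphasizing that the hypothesis $p>n$ is what makes the divided Frobenii integral.
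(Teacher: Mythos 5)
Your overall skeleton matches the paper's: reduce to the two assertions that $F(F^iM)\subset p^iM$ and that $M$ is generated by the images of the $p^{-i}F$, handle the projective case by \cite[Lemma 5.4 / Theorem 5.5]{lz1}, and prove the generation statement by reducing via Nakayama to a perfect field $A=W(k)$ (where it is Fontaine's strong divisibility of $H^n_{\crys}(X_0/W(k))$), exactly as in the proof of Proposition \ref{dd}. That part of your plan is sound.

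However, there is a genuine gap in your treatment of the divisibility statement for merely proper $\sX$. You claim that ``the divisibility $F(F^iM)\subset p^iM$ \emph{and} the generation property can be checked after base change to the residue fields of $A$.'' This is false for the divisibility half: containment in $p^iM$ is not a surjectivity statement, so Nakayama does not apply, and it cannot be detected after reduction modulo $\mathfrak{m}$ (where $p^iM$ maps to zero for $i\geq1$) nor after base change along the maps $A\to W(k')$ compatible with $\sigma$. For instance, with $A=W(k)[[t]]$ and $\sigma(t)=t^p$, the element $tm$ dies under every such specialization $t\mapsto0$ and hence lies in $p^iM\otimes W(k')$ for trivial reasons, while $tm\notin p^iM$. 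So your proposed reduction proves nothing about divisibility over $A$ itself. The paper instead proves the divisibility \emph{directly} over $A$ in the proper case by invoking Berthelot--Ogus's form of Mazur's theorem (\cite[Theorems 8.16 and 8.20]{bo}), which produces a factorization of Frobenius on $\Omega^{\geq i}_{\sX^\sigma/A}$ through $p^iA\otimes^{\mathbf{L}}_A\Omega^\bullet_{\sX/A}$ via the complexes $\Omega^{\epsilon}$ and $L\eta\Omega^\bullet$; taking hypercohomology and using the degeneration and projectivity hypotheses yields $F(F^iH^n_{\dR})\subset p^iH^n_{\dR}$ for proper smooth $\sX$. Some input of this kind (a d\'ecalage/gauge argument at the level of complexes, valid without projectivity) is genuinely needed; it cannot be replaced by pointwise specialization. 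Your use of $p>n$ to control the divided powers is the right intuition for why these theorems apply, but the actual mechanism must operate on the de Rham complex over $A$, not on fibers.
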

 
 \begin{proof}
 It suffices to prove that $F(F^iH^n_{\dR}(\sX/A))$ is contained in $p^iH^n_{\dR}(\sX/A)$ for $i\geq1$, and $H^n_{\dR}(\sX/A)$ is generated by the union of $(p^{-i}F)(F^iH^n_{\dR}(\sX/A))$ for $i\geq0$. 
 
 The former assertion is proved in \cite[Lemma 5.4]{lz1} when $\sX$ is projective over $A$. In the general case, we use the results of Berthelot-Ogus as follows.
 By \cite[Theorem 8.16 and Theorem 8.20]{bo}, for $i\in [0,n]$, we get the following commutative diagram in the derived category.
 \[ \xymatrix{
 \Omega_{\sX^{\sigma}/A}^{\geq i} \ar@{^{(}->}[r] & \Omega_{\sX^{\sigma}/A}^{\epsilon} \ar[rr] \ar[d] & & \Omega^{\bullet}_{\sX^{\sigma}/A} \ar[d]
 \\ & L\eta\Omega^{\bullet}_{\sX/A} \ar[r] & p^iA\otimes^{\mathbf{L}}_{A} \Omega^{\bullet}_{\sX/A} \ar[r] & \Omega^{\bullet}_{\sX/A}
 }
 \]
 Here $\epsilon: \bN\to \bN$ is defined by $\epsilon(j)=\max\{j-i,0\}$, $\eta: \bN\to \bN$ is defined by $\eta(j)=\epsilon(j)+j$, and $\Omega_{\sX^{\sigma}/A}^{\geq i}$(resp. $\Omega_{\sX^{\sigma}/A}^{\epsilon}$) is the subcomplex of $\Omega_{\sX^{\sigma}/A}^{\bullet}$ whose degree $j$ part is $0$ for $j\leq i-1$ and $\Omega^j_{\sX^{\sigma}/A}$ for $j\geq i$ (resp. $0$ for $j\leq -1$ and $\Omega_{\sX^{\sigma}/A}^{\epsilon(j)}$ for $j\geq0$). Taking hypercohomology, the first assertion is proved.
 
 We prove the latter assertion. By the same argument as the proof of Proposition \ref{dd}, it suffices to consider the case that $A=W(k)$ where $k$ is a perfect field and $\sigma$ is the Witt vector Frobenius. Then the latter assertion follows from the strong divisibility of the crystalline cohomology $H^n_{\crys}(X_0/W(k))$ (see \cite[Proposition in Section 1]{fo}).
 \end{proof}
 
 The $n$-display given by Proposition \ref{str div} is denoted by $\underline{H}^n_{\dR}(\sX/A)$.

\subsection{Displays and $K3$ surfaces over Artin local rings}
Let $R$ be an Artin local ring with perfect residue field $k$ of characteristic $p\geq3$.

\begin{dfn}[{\cite[Definition 8.0.1]{lau2}}]
A \emph{K3 display} $(\underline{M},(,))$ consists of a $\underline{\widehat{W}}(R)$-display $\underline{M}=(M,F)$ and a symmetric perfect pairing 
\[
 (,):\underline{M}\times \underline{M}\to \underline{\widehat{W}}(R)(-2)
\] (i.e.\ a morphism of $\underline{\widehat{W}}(R)$-displays $\underline{M}\otimes \underline{M}\to \underline{\widehat{W}}(R)(-2)$ which is a symmetric perfect pairing of graded $\underline{\widehat{W}}(R)$-modules) such that 
\[
  M^{\pi}\cong R\oplus R(-1)^{\oplus 20} \oplus R(-2) 
\]
as graded $R$-modules. Here, $\pi:\underline{\widehat{W}}(R)\twoheadrightarrow R$ is the composite of the two projection maps $\underline{\widehat{W}}(R)\twoheadrightarrow \widehat{W}(R)$ and $\widehat{W}(R)\twoheadrightarrow R$, and we put $M^{\pi} := M\otimes_{\underline{\widehat{W}}(R),\pi} R$.
\end{dfn}

The following result is proved by Langer-Zink \cite[Proposition 19]{lz2}
(see also \cite[Section 8.1]{lau2}).

\begin{prop}
Let $X$ be a $K3$ surface of over $R$. Then $H^2_{\crys}(X/\widehat{W}(R))$ is canonically equipped with $K3$ display structure. To be more precise, there exists a canonical $K3$ display \begin{align*}
  (\underline{H}^2_{\crys}(X/\widehat{W}(R)), (,))  
\end{align*} satisfying the following conditions:
\begin{enumerate}
    \item The degree $0$ part $(\underline{H}^2_{\crys}(X/\widehat{W}(R)))_0$ coincides with $H^2_{\crys}(X/\widehat{W}(R))$, and 
 \begin{align*}
    (,)_0:H^2_{\crys}(X/\widehat{W}(R))\times H^2_{\crys}(X/\widehat{W}(R))\to \widehat{W}(R) 
 \end{align*}
 is the pairing defined by Poincar\'e duality.
    \item For $i=1,2$, let $M^i$ be the image of the composite of the following maps:
    \begin{align*}
    (\underline{H}^2_{\crys}(X/\widehat{W}(R)))_i\xrightarrow{\times t^i} H^2_{\crys}(X/\widehat{W}(R))\twoheadrightarrow H^2_{\dR}(X/R).
    \end{align*}
 Then
 \begin{align*}
    H^2_{\dR}(X/R)\supset M^1\supset M^2 
 \end{align*}
 is the Hodge filtration.
 \item The $K3$ display $(\underline{H}^2_{\crys}(X/\widehat{W}(R)), (,))$ is compatible with base change with respect to $R$.
\end{enumerate}
\end{prop}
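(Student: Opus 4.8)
The plan is to reduce to the $p$-torsion free situation of Subsection \ref{subsection:2.5} by passing to the universal deformation, in close parallel with the proof of Proposition \ref{duality over alr}. Write $X_0:=X\otimes_R k$. The deformation functor of $X_0$ is formally smooth of relative dimension $20$ over $W(k)$, hence pro-represented by $A:=W(k)[[t_1,\ldots,t_{20}]]$; let $\sX\to\Spf(A)$ be the universal formal deformation. Equip $A$ with the Frobenius lift $\sigma$ determined by $\sigma(t_i)=t_i^p$ and $\sigma|_{W(k)}$ the Witt Frobenius. Then $A$ is Noetherian, $p$-torsion free and $p$-adic with $p\in\Rad(A)$, so the tautological frame $\underline{A}$ of Example \ref{2.3} is defined, and the classifying morphism $A\to R$ of $X/R$ satisfies $\sX\otimes_A R\cong X$.

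First I would produce the underlying display over $\underline{A}$. One checks that $\sX/A$ satisfies the hypotheses of Proposition \ref{str div}: it is proper and smooth (in general it is \emph{not} projective over $A$, which is exactly why the Berthelot--Ogus form of Proposition \ref{str div} is needed), the groups $H^j(\sX,\Omega^i_{\sX/A})$ are finite projective, and the Hodge--de Rham spectral sequence degenerates. Since $n=2<p$, Proposition \ref{str div} yields a $2$-display $\underline{H}^2_{\dR}(\sX/A)$ over $\underline{A}$. Its reduction $M^{\pi}$ is the graded object associated to the Hodge filtration on $H^2_{\dR}$, placed in degrees $0,1,2$; the Hodge numbers $h^{2,0}=h^{0,2}=1$ and $h^{1,1}=20$ give ranks $1,20,1$, matching the required shape $R\oplus R(-1)^{\oplus 20}\oplus R(-2)$.

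Next I would construct the pairing from Poincar\'e duality. Cup product gives a symmetric pairing
\begin{align*}
 H^2_{\dR}(\sX/A)\times H^2_{\dR}(\sX/A)\to H^4_{\dR}(\sX/A)\cong A,
\end{align*}
which is perfect since $\sX/A$ is proper smooth of relative dimension $2$, and for which the Hodge filtration is self-dual (the pieces $F^i$ and $F^{3-i}$ are orthogonal complements). Under the identification with crystalline cohomology this pairing is compatible with the crystalline Frobenius, $F(x\cup y)=Fx\cup Fy$, and the Frobenius on $H^4_{\dR}(\sX/A)$ is multiplication by $p^2$; together with the normalization $\sigma_{-2}(t^2)=p^2$ of $\underline{A}(-2)$, this shows that Poincar\'e duality defines a morphism of displays $\underline{H}^2_{\dR}(\sX/A)\otimes\underline{H}^2_{\dR}(\sX/A)\to\underline{A}(-2)$ which is a symmetric perfect pairing of graded modules. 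I expect this step---reconciling Poincar\'e duality and the crystalline Frobenius with the $(-2)$-twist in the sense of displays---to be the main obstacle.

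Finally I would descend to $\underline{\widehat{W}}(R)$. The classifying map $A\to R$ and its Teichm\"uller lift $A\to\widehat{W}(R)$, $t_i\mapsto[x_i]$, factor through $A_m=A/(t_1^m,\ldots,t_{20}^m)$ for $m\gg0$ (each $x_i$ being nilpotent), and induce a morphism of frames $\underline{A}\to\underline{\widehat{W}}(R)$ by Remark \ref{2.5}. Base-changing the $2$-display together with its pairing along $\underline{A}\to\underline{\widehat{W}}(R)$ produces a K3 display over $\underline{\widehat{W}}(R)$. By the compatibility of crystalline cohomology, of the Hodge filtration, and of Poincar\'e duality with base change, its degree-$0$ part is $H^2_{\crys}(X/\widehat{W}(R))$ with its Poincar\'e pairing, and the images $M^1\supset M^2$ recover the Hodge filtration on $H^2_{\dR}(X/R)$; this gives conditions (1) and (2). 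Canonicity---independence of the chosen universal deformation and of $\sigma$, and compatibility in $R$ (condition (3))---then follows from the functoriality of Proposition \ref{str div} together with the uniqueness arguments already used in the proof of Proposition \ref{duality over alr}.
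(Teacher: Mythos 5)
Your proposal is correct and follows essentially the same route as the paper: pass to the universal deformation over $A=W(k)[[t_1,\ldots,t_{20}]]$ with the Frobenius lift $\sigma(t_i)=t_i^p$, obtain the $2$-display with its pairing over the tautological frame via Proposition \ref{str div} (using $p\geq 3>2$), and base-change along $\underline{A}\to\underline{A_m}\to\underline{\widehat{W}}(R)$ induced by $t_i\mapsto[x_i]$. Your discussion of the Poincar\'e duality pairing and the $(-2)$-twist is more detailed than the paper's sketch, but the argument is the same.
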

 
 \begin{proof}
  We briefly give a sketch of the proof for readers' convenience.
  Let $X_0:=X\otimes_{R}k$. Let $\sX\to \Spf(W(k)[[t_1,\ldots,t_{20}]])$ be the universal deformation of $X_0$. We put $A:=W(k)[[t_1,\ldots,t_{20}]]$ and $A_n:=A/(t_1^n,\ldots,t_{20}^n)$. Let $\sigma$ be a Frobenius lift such that $\sigma(t_i)=t_{i}^p$ for all $i$ and such that $\sigma$ coincides with the Witt vector Frobenius on $W(k)$. The Frobenius lift $\sigma$ defines the tautological frames $\underline{A}$ and $\underline{A_n}$. We have the unique morphism of local $W(k)$-algebras $A\to R, (t_i\mapsto x_i)$ corresponding to $X/R$. The morphism of local $W(k)$-algebras $A\to \widehat{W}(R)$ which sends $t_i$ to $[x_i]$ for all $i$ induces a morphism of frames $\underline{A}\to \underline{\widehat{W}}(R)$. Here, $[x_i]$ denotes the Teichm\"uller lift of $x_i$. Since $R$ is an Artin local ring, the ring morphism $A\to R$ (resp.\ the morphism of frames $\underline{A}\to \underline{\widehat{W}}(R)$) factors as $A\twoheadrightarrow A_m\to R$ (resp.\ $\underline{A}\to \underline{A_m}\to \underline{\widehat{W}}(R)$) for some $m\geq1$. Then, by taking base change of the $\underline{A_m}$-display $\underline{H}^2_{\dR}((\sX\otimes_A A_m)/A_m)$ (defined in Subsection \ref{subsection:2.5}) and a symmetric perfect pairing
  \[
    \underline{H}^2_{\dR}((\sX\otimes_A A_m)/A_m)\times \underline{H}^2_{\dR}((\sX\otimes_A A_m)/A_m)\to \underline{A_m}(-2) 
  \]
  by the morphism of frames $\underline{A_m}\to \underline{\widehat{W}}(R)$, we get the desired $K3$ display.
 \end{proof}
 The following theorem is a display-theoretic deformation theory for $K3$ surfaces proved by Lau \cite{lau2}. (When $X$ is ordinary, it is proved by Langer and Zink \cite{lz2}.)
 \begin{thm}[Lau] \label{dispdef}
 Let $S\twoheadrightarrow R$ be a surjection of Artin local rings with perfect residue field of characteristic $p\geq3$ and $X$ be a $K3$ surface over $R$. Then 
 \begin{align*}
    X'/S\mapsto \underline{H}^2_{\crys}(X'/\widehat{W}(S)) 
 \end{align*} gives an equivalence of categories between deformations of $X$ to $K3$ surfaces over $S$ and deformations of $\underline{H}^2_{\crys}(X/\widehat{W}(R))$ to $K3$ displays over $\widehat{W}(S)$.
 \end{thm}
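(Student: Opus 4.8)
The plan is to reduce to the case of a single small surjection and then identify both deformation groupoids as torsors under one and the same abelian group, compatibly with the functor $X'\mapsto \underline{H}^2_{\crys}(X'/\widehat{W}(S))$. First I would factor $S\twoheadrightarrow R$ into a finite chain of small surjections, i.e.\ surjections whose kernel $I$ satisfies $\mathfrak{m}_S I=0$ (so that $I$ is a $k$-vector space and $I^2=0$); this is possible because $R$ and $S$ are Artin local, and since both the $K3$-display construction and the formation of crystalline cohomology are compatible with base change, it suffices to treat one such surjection. Moreover, a $K3$ surface $X_0$ over $k$ has $H^0(X_0,T_{X_0})=H^0(X_0,\Omega^1_{X_0})=0$, so neither deformation groupoid has nontrivial infinitesimal automorphisms; both are therefore rigid, and the problem reduces to producing a bijection on isomorphism classes that respects the torsor structures.

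On the geometric side I would invoke the crystalline deformation theory of $K3$ surfaces. Deformations of $X$ to $S$ are unobstructed, and by crystalline deformation theory (cf.\ \cite{no}) the set of such deformations is a torsor under $H^1(X_0,T_{X_0})\otimes_k I$; via Kodaira--Spencer and Poincaré duality this is identified with $\mathrm{Hom}_k(F^2,F^1/F^2)\otimes_k I$, the space of liftings of the isotropic Hodge line $F^2=H^{2,0}\subset H^2_{\dR}(X/R)$ inside its orthogonal complement $F^1=(F^2)^{\perp}$. Because crystalline cohomology is a crystal, it base-changes correctly along $\widehat{W}(S)\twoheadrightarrow\widehat{W}(R)$ (see \cite{bo}), so the functor sends a geometric deformation to the $K3$ display whose underlying module and Frobenius are the canonical extensions and whose Hodge filtration is precisely the corresponding lift of $F^2$; this is exactly the base-change recipe by which $\underline{H}^2_{\crys}$ was built out of Proposition \ref{str div}.

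On the display side I would use the deformation theory of displays over the small Witt frame. Given the $K3$ display over $\widehat{W}(R)$, its finite projective module lifts (uniquely up to isomorphism) to $\widehat{W}(S)$ and its $\sigma$-linear operators extend, since projective modules and the divided Frobenius are rigid along the nilpotent thickening of frames $\underline{\widehat{W}}(S)\to\underline{\widehat{W}}(R)$; the only remaining freedom is the choice of the Hodge filtration $M^2\subset M^1$ compatible with the symmetric pairing. Exactly as in the $p$-divisible case underlying Theorem \ref{dieudonne theory}, the isotropy condition reduces this to lifting the line $M^2$, so the groupoid of $K3$-display deformations is a torsor under $\mathrm{Hom}_R(M^2,M^1/M^2)\otimes_R I$, which the comparison $M^i\leftrightarrow F^i$ identifies with the group appearing on the geometric side.

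The final step, which I expect to be the main obstacle, is to check that the functor matches the two torsor structures, i.e.\ that the geometric Kodaira--Spencer map agrees with the display-theoretic variation of the Hodge filtration. I would verify this over the universal deformation $\sX\to\Spf(A)$ of the closed fiber, where $A=W(k)[[t_1,\ldots,t_{20}]]$ and the crystalline cohomology carries the tautological-frame display of Proposition \ref{str div}: the variation of the Hodge filtration of $\underline{H}^2_{\dR}(\sX/A)$ along the Gauss--Manin connection realizes the Kodaira--Spencer map, which for a $K3$ surface is an isomorphism $H^1(X_0,T_{X_0})\xrightarrow{\sim}\mathrm{Hom}_k(F^2,F^1/F^2)$ onto the space of infinitesimal liftings of the Hodge line. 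Descending to an arbitrary small surjection by base change along $\underline{A}\to\underline{\widehat{W}}(R)$, the induced map of torsors is then an isomorphism; combined with the rigidity coming from $H^0(X_0,T_{X_0})=0$, this yields the claimed equivalence of categories.
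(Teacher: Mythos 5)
The paper does not actually prove this statement: it is quoted from Lau \cite[Theorem 8.1.1]{lau2}, so there is no internal argument to compare against. Your outline follows the standard Grothendieck--Messing strategy that Lau and Langer--Zink implement (reduction to small surjections, rigidity from $H^0(X_0,T_{X_0})=0$, identification of both deformation sets as torsors under $\mathrm{Hom}_k(F^2,F^1/F^2)\otimes_k I$), but as written it elides the two steps that constitute essentially all of the content of the theorem. First, on the display side you assert that once the finite projective module is lifted, ``the divided Frobenius [is] rigid along the nilpotent thickening of frames'' and that ``the only remaining freedom is the choice of the Hodge filtration,'' by analogy with the $p$-divisible case. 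For a $1$-display this rigidity is Zink--Lau's crystalline property of Dieudonn\'e displays, but a K3 display is a $2$-display: the operator $F_2$ on the degree-$2$ part of the filtration is a division of $F$ by $p^2$, and proving that the whole structure $(P_i,\iota_i,\alpha_i,F_i)$ extends, uniquely and functorially, across $\underline{\widehat{W}}(S)\to\underline{\widehat{W}}(R)$ once a lift of the filtration is chosen is exactly the technical heart of \cite{lz2} and of \cite[Section 7]{lau2}; it is also precisely the point where the analogous crystalline argument of Nygaard--Ogus requires $p\geq5$ and a $p$-torsion-free PD envelope. Asserting it ``exactly as in the $p$-divisible case'' is a genuine gap.

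Second, on the geometric side you identify the deformation torsor with liftings of the Hodge line by invoking ``crystalline deformation theory (cf.\ \cite{no}).'' That theory is available only for $p\geq 5$ and over bases of the form $k[t]/(t^n)$, so appealing to it would reimpose exactly the restrictions that the display-theoretic statement is designed to remove; for $p=3$ and a general Artin local $S$ you must instead prove directly that the functor $X'\mapsto \underline{H}^2_{\crys}(X'/\widehat{W}(S))$ intertwines the Kodaira--Spencer torsor structure with the variation of the display filtration. Your plan to check this over the universal deformation via Gauss--Manin and local Torelli is the right idea, and it is where the $p$-torsion-free base $A=W(k)[[t_1,\dots,t_{20}]]$ and Proposition \ref{str div} enter, but it is a computation that must be carried out rather than a formal consequence of base change. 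In short: the architecture matches the actual proof, but the two load-bearing lemmas are asserted rather than proved, and one of them is propped up by a citation that does not cover the stated generality.
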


\begin{proof}
See \cite[Theorem 8.1.1]{lau2}.
\end{proof}

\section{Crystalline cohomology and the enlarged formal Brauer group}
\label{Section:3}
First, we recall some results in \cite[Section 3]{no}. Let $p$ be a prime number, $k$ be a perfect field of characteristic $p$, $R$ be an Artin local ring over $W(k)$ with residue field $k$, and $X/R$ be a $K3$ surface of finite height (i.e.\ a proper flat scheme over $R$ whose closed fiber $X_{0}$ is a $K3$ surface of finite height). To this, we can associate a formal $p$-divisible group $\widehat{\Br}_{X/R}$ which is called the \emph{formal Brauer group} of $X/R$ and a $p$-divisible group $\psi_{X/R}$ which is called the \emph{enlarged formal Brauer group}. We note that $\widehat{\Br}_{X/R}$ is the connected component of the identity of $\psi_{X/R}$. For a $p$-divisible group $G$ over $R$, let $\bD(G)$ be the (covariant) Dieudonn\'e crystal on $\NilCrys(\Spec(R)/\Spf(\bZ_p))$. Let $\bH^{2}_{\crys}(X/R)$ be the crystal on ${\Crys}(\Spec(R)/\Spf(\bZ_p))$ such that
\[
 \bH^{2}_{\crys}(X/R)(S\twoheadrightarrow R)=H^{2}_{\crys}(X/S)
\]
for any $(S\twoheadrightarrow R)\in \Crys(\Spec(R)/\Spf(\bZ_p))$.
By \cite[Theorem 3.16]{no}, there is a natural morphism of $F$-crystals
\begin{align*}
   \rho:\bD(\psi_{X/R})\to \bH^{2}_{\crys}(X/R).
\end{align*}

By the composition with the inclusion $\bD(\widehat{\Br}_{X/R})\hookrightarrow \bD(\psi_{X/R})$, we get a morphism $\bD(\widehat{\Br}_{X/R})\to \bH^{2}_{\crys}(X/R)$. Taking the dual, we get a morphism of $F$-crystals \begin{align*}
 \bH^{2}_{\crys}(X/R)\to \bD(\widehat{\Br}^{\ast}_{X/R})(-1)   
\end{align*} because $\bH^{2}_{\crys}(X/R)^{\ast}=\bH^{2}_{\crys}(X/R)(2)$ and $\bD(\widehat{\Br}_{X/R})^{\ast}=\bD(\widehat{\Br}^{\ast}_{X/R})(1)$.

\begin{thm} \label{3.2}
There is the following exact sequence of $F$-crystals:
\[
    0\to \bD(\psi_{X/R})\xrightarrow{\rho} \bH^{2}_{\crys}(X/R)\to \bD(\widehat{\Br}^{\ast}_{X/R})(-1)\to 0.
\]
Moreover, by evaluating on the trivial PD thickening $R\xrightarrow{\id} R$, we get the following commutative diagram:
\[
\xymatrix{
0 \ar[r] & 0 \ar[r] \ar[d] & F^{2}H^{2}_{\dR}(X/R) \ar[r] \ar@{^{(}->}[d] & F^{1}\bD(\widehat{\Br}^{\ast}_{X/R})(-1)_{R} \ar[r] \ar@{^{(}->}[d] 
& 0
\\
0 \ar[r] & F^{1}\bD(\psi_{X/R})_{R} \ar[r] \ar@{^{(}->}[d] & F^{1}H^{2}_{\dR}(X/R) \ar[r] \ar@{^{(}->}[d] & \bD(\widehat{\Br}^{\ast}_{X/R})(-1)_{R}
\ar[r] \ar@{=}[d] & 0
\\
0 \ar[r] & \bD(\psi_{X/R})_{R} \ar[r]^{\rho} & H^{2}_{\dR}(X/R) \ar[r] & \bD(\widehat{\Br}^{\ast}_{X/R})(-1)_{R} \ar[r] & 0
}
\]
In this diagram, all horizontal rows are exact. 
\end{thm}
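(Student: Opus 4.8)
The plan is to realise the sequence first as a complex of $F$-crystals, then prove exactness by reduction to the closed fibre $X_{0}/k$, and finally obtain the filtered diagram by evaluating on the trivial PD thickening $R\xrightarrow{\id}R$. The three terms are finite locally free crystals whose formation commutes with base change in $R$ --- for $\bH^{2}_{\crys}(X/R)$ this is base change of crystalline cohomology, for $\bD(\psi_{X/R})$ and $\bD(\widehat{\Br}^{\ast}_{X/R})$ the functoriality of the Dieudonn\'e crystal together with the base change of $\psi_{X/R}$ and $\widehat{\Br}_{X/R}$, and for $\rho$ it is contained in \cite[Theorem 3.16]{no}. Because a short exact sequence of finite locally free crystals is locally split, once exactness is established it persists after every base change and every evaluation, which is what ultimately yields the last sentence of the theorem.

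First I would check that the composite $\bD(\psi_{X/R})\xrightarrow{\rho}\bH^{2}_{\crys}(X/R)\to\bD(\widehat{\Br}^{\ast}_{X/R})(-1)$ vanishes. By construction the second arrow is the dual, under the self-duality $\bH^{2}_{\crys}(X/R)^{\ast}\cong\bH^{2}_{\crys}(X/R)(2)$, of the inclusion $\bD(\widehat{\Br}_{X/R})\hookrightarrow\bD(\psi_{X/R})\xrightarrow{\rho}\bH^{2}_{\crys}(X/R)$, so after these identifications the composite is the pairing of $\rho(\bD(\psi_{X/R}))$ against $\rho(\bD(\widehat{\Br}_{X/R}))$. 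Its vanishing is a morphism of crystals, hence may be checked after restriction to the closed point and evaluation on $W(k)$; there it follows from the slope decomposition of $H^{2}_{\crys}(X_{0}/W(k))$, since the image under $\rho$ of $\bD(\widehat{\Br}_{X_{0}})$ (resp.\ $\bD(\psi_{X_{0}})$) is the part of slope ${<}1$ (resp.\ ${\le}1$), and Poincar\'e duality pairs slope $s$ with slope $2-s$, so these two parts are orthogonal.

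For exactness I again reduce to the closed fibre. Over $k$, evaluated on $W(k)$, the sequence reads $0\to\bD(\psi_{X_{0}})_{W(k)}\to H^{2}_{\crys}(X_{0}/W(k))\to\bD(\widehat{\Br}^{\ast}_{X_{0}})(-1)_{W(k)}\to0$: injectivity of $\rho$ and the identification of its image with the slope-${\le}1$ part are Artin--Mazur together with \cite[Theorem 3.16]{no}, the target is the slope-${>}1$ part, and the orthogonality just proved shows the image is exactly the kernel, a rank count $(22-h)+h=22$ closing the middle. As these terms are locally free crystals the sequence is locally split, so its evaluation on the trivial thickening, the de Rham sequence over $k$, is exact as well. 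To lift this to $R$, I evaluate the $R$-sequence on a $p$-torsion-free PD thickening $D\twoheadrightarrow R$ (available because $p\ge3$), obtaining a complex of finite free $D$-modules whose reduction modulo the maximal ideal of $D$ is precisely that de Rham sequence over $k$; since it is a complex whose outer ranks add to the middle rank and which is short exact after reduction, Nakayama forces the $D$-complex to be short exact. Exactness on one flat thickening propagates to exactness of crystals, and local splitness then gives exactness after every evaluation.

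Finally, evaluating the established crystal sequence on $R\xrightarrow{\id}R$ yields the exact bottom row $0\to\bD(\psi_{X/R})_{R}\xrightarrow{\rho}H^{2}_{\dR}(X/R)\to\bD(\widehat{\Br}^{\ast}_{X/R})(-1)_{R}\to0$. The columns are the Hodge filtrations: the usual $F^{2}\subset F^{1}\subset H^{2}_{\dR}(X/R)$ in the middle, the length-one filtration $F^{1}\bD(\psi_{X/R})_{R}=\Lie(\psi^{\ast}_{X/R})^{\ast}$ on the left (so $F^{2}=0$, giving the zero in the upper-left corner), and its Tate-twisted analogue on the right. Using that $\rho$ is compatible with the Hodge filtrations and the Artin--Mazur identification $\Lie(\widehat{\Br}_{X/R})\cong H^{2}(X,\cO_{X})$, which pins the one-dimensional bottom piece $F^{2}H^{2}_{\dR}(X/R)=H^{2,0}$ to the formal Brauer group, one gets $F^{1}\bD(\psi_{X/R})_{R}=\bD(\psi_{X/R})_{R}\cap F^{1}H^{2}_{\dR}(X/R)$ and an isomorphism $F^{2}H^{2}_{\dR}(X/R)\xrightarrow{\sim}F^{1}\bD(\widehat{\Br}^{\ast}_{X/R})(-1)_{R}$; strictness of the Hodge filtration then produces the two remaining rows, and compatibility with base change is inherited from that of the crystal sequence. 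I expect the main obstacle to be the exactness step --- concretely the orthogonality $\rho(\bD(\psi_{X/R}))=\rho(\bD(\widehat{\Br}_{X/R}))^{\perp}$ on the closed fibre, which carries the middle-exactness and rests on slope duality, together with the careful propagation to all of $R$ in characteristic $p=3$ through a $p$-torsion-free thickening.
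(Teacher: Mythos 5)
First, be aware that the paper does not prove this statement: its entire proof is ``See \cite[Theorem 3.20]{no}''. Theorem \ref{3.2} is imported from Nygaard--Ogus, so the only meaningful comparison is with their argument, which constructs $\rho$ via flat cohomology and the de Rham--Witt complex and verifies exactness and the filtration compatibilities directly from that construction. Your reconstruction follows a different route (reduce to the closed fibre, then propagate by Nakayama), and it contains a genuine gap at its central step. You assert that the vanishing of the composite $\bD(\psi_{X/R})\to\bH^{2}_{\crys}(X/R)\to\bD(\widehat{\Br}^{\ast}_{X/R})(-1)$ ``may be checked after restriction to the closed point''. That is false for morphisms of crystals over an Artin local base: over $R=k[\epsilon]$, multiplication by a lift of $\epsilon$ on a weakly final thickening $D$ is a nonzero morphism of crystals that dies on the closed fibre. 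Since your Nakayama/rank-count argument needs as input that the sequence is a \emph{complex} over the $p$-torsion-free thickening $D$, and that is precisely what the closed-fibre check does not give, the exactness part of your proof does not close. It can be repaired, but only by invoking the Frobenius structure together with the slope gap (source of slopes $\le 1$, target of slopes $>1$): writing $u$ for the composite over $D$, one has $p^{h}u(M)=u(p^{h}M)\subseteq u(F_M^{h}(M^{\sigma^{h}}))=F_N^{h}(u^{\sigma^{h}}(M^{\sigma^{h}}))\subseteq p^{h+1}N$, hence $u(M)\subseteq pN$, and iterating gives $u=0$. You use slopes only to prove orthogonality over $k$, not to descend that vanishing to $R$; this rigidity step (or NO's direct computation) is the missing ingredient. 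Once the complex is in place, your Nakayama argument for middle exactness over $D$ and the local-splitting propagation to all thickenings are fine.

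The second gap is in the filtered diagram. You derive the top two rows from ``$\rho$ is compatible with the Hodge filtrations'' and ``strictness of the Hodge filtration'', but compatibility and strictness \emph{are} the nontrivial content of \cite[Theorem 3.20]{no} (resting on their Propositions 3.18--3.19): one must identify the map that $\rho$ induces on Lie algebras with the map built from $d\log$ and the first Chern class, and in particular show that the rank-one map $F^{2}H^{2}_{\dR}(X/R)\to F^{1}\bD(\widehat{\Br}^{\ast}_{X/R})(-1)_{R}$ is an isomorphism rather than, say, zero. Serre duality and the Artin--Mazur identification $\Lie(\widehat{\Br}_{X/R})\cong H^{2}(X,\cO_{X})$ match the ranks of the two sides, but they do not by themselves show that the specific arrow in your sequence realises that isomorphism. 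As written, this part of the argument assumes what is to be proved.
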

\begin{proof}
See \cite[Theorem 3.20]{no}.
\end{proof}

In this section, we prove a display-theoretic analogue of Theorem \ref{3.2}. By evaluating on the PD thickening $\widehat{W}(R)\twoheadrightarrow R$, we get a morphism of $\widehat{W}(R)$-modules 
\begin{align*}
 \hat{\rho}: \bD(\psi_{X/R})_{(\widehat{W}(R)\to R)}\to H^{2}_{\crys}(X/\widehat{W}(R)).   
\end{align*} We want to construct a morphism of $\underline{\widehat{W}}(R)$-displays $\underline{D}(\psi_{X/R})\to \underline{H}^{2}_{\crys}(X/\widehat{W}(R))$ which induces $\hat{\rho}$ on the underlying $\widehat{W}(R)$-modules. In order to do this, we work on $p$-torsion free bases.

In the following, we assume $p\geq3$. Let $\sX$$ \to$ Spf $W(k)[[t_{1},...,t_{20}]]$ be the universal deformation of $X_{0}$. We put $A:= W(k)[[t_{1},...,t_{20}]]$, $A_{n}:= A/(t_{1}^{n},...,t_{20}^{n})$, and $\sX_{n}:= \sX \otimes_A A_{n}$. Let $\sigma:A\to A$ be the endomorphism such that $\sigma(t_{i})=t_{i}^{p}$ and such that $\sigma$ is the Frobenius on $W(k)$. By evaluating $\rho$ on the trivial PD thickening $A_{n}\xrightarrow{\id} A_{n}$, we get a morphism of $A_{n}$-modules
\[ \rho_{A_{n}}: \bD(\psi_{\sX_{n}/A_{n}})_{A_{n}}\to H^{2}_{\dR}(X_{n}/A_{n}). \]
Here $\bD(\psi_{\sX_{n}/A_{n}})_{A_{n}}$ is the underlying $A_{n}$-module of the Dieudonn\'e display associated to $\psi_{\sX_{n}/A_{n}}$, and $H^{2}_{\dR}(\sX_{n}/A_{n})$ is  the underlying $A_{n}$-module of the $K3$ display associated to the $K3$ surface $\sX_{n}/A_{n}$.

\begin{prop} \label{3.3}
The map $\rho_{A_{n}}$ defines a morphism of $\underline{A_{n}}$-displays
\begin{align*}
 \underline{\rho_{A_{n}}} : \underline{D}(\psi_{\sX_{n}/A_{n}})\to \underline{H}^{2}_{\dR}(\sX_{n}/A_{n}).   
\end{align*}
\end{prop}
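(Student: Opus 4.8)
The plan is to show that the $A_n$-linear map $\rho_{A_n}$ is compatible with the two structures making each side an $\underline{A_n}$-display in the sense of Remark \ref{taut disp}: namely it respects the Hodge filtrations and commutes with the divided Frobenii $F_i$. Since an $\underline{A_n}$-display over the tautological frame is, by Remark \ref{taut disp}, a filtered finite projective module $(M,(F^iM),F_i)$, a morphism of displays is precisely an $A_n$-linear map carrying $F^iM$ into $F^iM'$ and commuting with the $\sigma$-linear maps $F_i$ for all $i$. So there are two things to verify: the filtration compatibility and the Frobenius compatibility.

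First I would handle the filtration. The display $\underline{D}(\psi_{\sX_n/A_n})$ is a $1$-display, with $F^0=D(\psi)$, $F^1D(\psi)=\Lie(\psi^\ast)^\ast$, and $F^i=0$ for $i\geq 2$; the $K3$ display $\underline{H}^2_{\dR}(\sX_n/A_n)$ is a $2$-display with the Hodge filtration $H^2_{\dR}\supset F^1\supset F^2$. I would invoke Theorem \ref{3.2}: although that theorem is stated over an Artin local ring $R$, its proof via \cite[Theorem 3.20]{no} applies verbatim over the $p$-torsion-free base $A_n$ (whose PD envelope behaves well, which is exactly the setting \cite{no} requires). Evaluating the exact sequence of $F$-crystals on the trivial PD thickening $A_n\xrightarrow{\id}A_n$ produces the commutative diagram of Theorem \ref{3.2} with $R$ replaced by $A_n$. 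The left column of that diagram reads $F^1\bD(\psi)_{A_n}\hookrightarrow F^1H^2_{\dR}$ and $\bD(\psi)_{A_n}\hookrightarrow H^2_{\dR}$ with $F^0\bD(\psi)=\bD(\psi)$ mapping into $F^0=H^2_{\dR}$ and $F^1\bD(\psi)$ mapping into $F^1H^2_{\dR}$. This is exactly the statement that $\rho_{A_n}(F^iD(\psi))\subset F^iH^2_{\dR}$ for all $i$, giving the filtration compatibility.

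Next comes the Frobenius compatibility, which I expect to be the main obstacle. On each side the $\sigma$-linear structure comes from crystalline Frobenius: on $\underline{D}(\psi)$ the map $F$ (and hence $F_1$ on $F^1$) is built from the Verschiebung on $\psi\otimes A_n/pA_n$ as in the construction preceding Proposition \ref{dd}, while on $\underline{H}^2_{\dR}(\sX_n/A_n)$ the Frobenius $F$ comes from crystalline Frobenius via the identification $H^2_{\dR}\cong H^2_{\crys}(X_{n,0}/A_n)$ as in Proposition \ref{str div}. The point is that $\rho$ is a morphism of $F$-\emph{crystals} (this is part of the content of \cite[Theorem 3.16]{no} and Theorem \ref{3.2}), so after evaluating on the Frobenius-compatible PD thickening $A_n\xrightarrow{\id}A_n$ the underlying map $\rho_{A_n}$ intertwines the two crystalline Frobenii $F$. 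Combined with the filtration compatibility already established and the relation $pF_{i+1}=F_i|_{F^{i+1}}$ defining the $F_i$ from $F$, the commutation $\rho_{A_n}\circ F_i = F_i\circ\rho_{A_n}$ on $F^iD(\psi)$ follows: since $A_n$ is $p$-torsion free the $F_i$ are determined by $F$ through division by $p^i$, and $\rho_{A_n}$ commutes with $F$ and preserves the filtration. The delicate step is to confirm that the $F$ appearing in the definition of $\underline{D}(\psi)$ over the tautological frame $\underline{A_n}$ really is the one induced by the $F$-crystal structure of $\bD(\psi)$ evaluated on $A_n\xrightarrow{\id}A_n$; this is exactly the compatibility packaged in the construction before Proposition \ref{dd} together with the functoriality of the Dieudonné crystal, so once that identification is spelled out the Frobenius compatibility reduces to the crystal-level statement. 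Assembling these verifications shows $\rho_{A_n}$ is a morphism of $\underline{A_n}$-displays.
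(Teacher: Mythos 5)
Your proposal is correct and follows essentially the same route as the paper: the paper likewise reduces the claim to checking that $\rho_{A_n}$ commutes with $F$ (immediate since $\rho$ is a morphism of $F$-crystals) and preserves the filtration, the latter being read off from the commutative diagram of Theorem \ref{3.2} evaluated on $A_n\xrightarrow{\id}A_n$, using that the source is a $1$-display so only $F^1$ needs checking. Your extra remarks---that the diagram of Theorem \ref{3.2} must be invoked over the base $A_n$ rather than an Artin local ring, and that commutation with the divided Frobenii $F_i$ follows from commutation with $F$ by $p$-torsion-freeness of $A_n$---are points the paper leaves implicit, and they are correct.
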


\begin{proof}
It suffices to prove that $\rho_{A_{n}}$ commutes with $F$ and preserves filtrations. The former is clear because $\rho$ is a morphism of $F$-crystals. We prove the latter assertion. Since $\underline{D}(\psi_{\sX_{n}/A_{n}})$ is a 1-display, it suffices to prove that
$\rho_{A_{n}}(F^{1}D(\psi_{\sX_{n}/A_{n}}))$ is contained in $F^{1}H^{2}_{\dR}(\sX_{n}/A_{n})$. This assertion follows from the commutative diagram in Theorem \ref{3.2}.
\end{proof}

Composing $\rho_{A_{n}}$ with $\underline{D}(\widehat{\Br}_{\sX_{n}/A_{n}})\hookrightarrow \underline{D}(\psi_{\sX_{n}/A_{n}})$, we get a morphism of $\underline{A_n}$-displays \ $\underline{D}(\widehat{\Br}_{\sX_{n}/A_{n}})\to \underline{H}^{2}_{\dR}(\sX_{n}/A_{n})$. By taking the dual of this map, we get a morphism of $\underline{A_n}$-displays \[ \underline{H}^{2}_{\dR}(\sX_{n}/A_{n})\to \underline{D}(\widehat{\Br}^{\ast}_{\sX_{n}/A_{n}})(-1) \]
because
$\underline{D}(\widehat{\Br}_{\sX_{n}/A_{n}})^{\ast}\cong \underline{D}(\widehat{\Br}^{\ast}_{\sX_{n}/A_{n}})(1)$
and 
$(\underline{H}^{2}_{\dR}(\sX_{n}/A_{n}))^{\ast}\cong \underline{H}^{2}_{\dR}(\sX_{n}/A_{n})(2)$.

\begin{thm} \label{3.4}
There is the following exact sequence of $\underline{A_n}$-displays:
\begin{align*}
 0\to \underline{D}(\psi_{\sX_{n}/A_{n}})\xrightarrow{\underline{\rho_{A_{n}}}} \underline{H}^{2}_{\dR}(\sX_{n}/A_{n})\to \underline{D}(\widehat{\Br}^{\ast}_{\sX_{n}/A_{n}})(-1)\to 0  
\end{align*}
\end{thm}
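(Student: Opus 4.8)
The plan is to unwind the definition of an exact sequence of displays. Since exactness of a sequence of $\underline{A_n}$-displays means exactness of the underlying sequence of graded $A_n[t]$-modules, and a sequence of graded modules is exact if and only if it is exact in each degree, it suffices to check exactness degree by degree. The whole argument will then reduce to the commutative $3\times3$ diagram of Theorem \ref{3.2}, applied with $X/R$ replaced by $\sX_n/A_n$ (so $R=A_n$) and read off in the appropriate degrees.

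First I would record that the three terms are genuine $\underline{A_n}$-displays and the two arrows are morphisms of displays. For the middle arrow $\underline{\rho_{A_n}}$ this is Proposition \ref{3.3}; for the quotient arrow it holds because that arrow is obtained from $\underline{\rho_{A_n}}$, the inclusion $\underline{D}(\widehat{\Br}_{\sX_n/A_n})\hookrightarrow \underline{D}(\psi_{\sX_n/A_n})$, dualization, and a Tate twist, all of which preserve the property of being a morphism of displays. Here one uses the identifications $\underline{D}(\widehat{\Br}_{\sX_n/A_n})^{\ast}\cong \underline{D}(\widehat{\Br}^{\ast}_{\sX_n/A_n})(1)$ coming from Proposition \ref{duality over tf} and $(\underline{H}^{2}_{\dR}(\sX_n/A_n))^{\ast}\cong \underline{H}^{2}_{\dR}(\sX_n/A_n)(2)$ coming from the $K3$ pairing.

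Next I would identify the graded pieces via the dictionary of Remark \ref{taut disp}. Over the tautological frame $\underline{A_n}$, the degree-$j$ part of a display is the $j$-th step $F^j$ of its filtered underlying module, with the convention $F^j=M$ for $j\leq 0$ and $F^j=0$ above the length of the display, and the $t$-action is an isomorphism in non-positive degrees. Because both arrows are morphisms of displays, hence filtered, their degree-$j$ components are the restrictions of their degree-$0$ components to $F^j$. Concretely, $\underline{D}(\psi_{\sX_n/A_n})$ contributes $\bD(\psi_{\sX_n/A_n})_{A_n}$ in degrees $\leq 0$, its $F^1$ in degree $1$, and $0$ in degrees $\geq 2$; $\underline{H}^{2}_{\dR}(\sX_n/A_n)$ contributes the Hodge steps $F^0,F^1,F^2$ in degrees $0,1,2$; and, because of the Tate twist, $\underline{D}(\widehat{\Br}^{\ast}_{\sX_n/A_n})(-1)$ contributes $\bD(\widehat{\Br}^{\ast}_{\sX_n/A_n})(-1)_{A_n}$ in degrees $\leq 1$ and $F^1\bD(\widehat{\Br}^{\ast}_{\sX_n/A_n})(-1)_{A_n}$ in degree $2$.

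Finally, matching degrees against the rows of the diagram in Theorem \ref{3.2}, exactness in degrees $\leq 0$ is the bottom ($F^0$) row, exactness in degree $1$ is the middle ($F^1$) row, exactness in degree $2$ is the top ($F^2$) row, and degrees $\geq 3$ are trivially exact; the remaining negative degrees follow from degree $0$ via the $t$-isomorphisms. Since all rows of that diagram are exact, the underlying graded-module sequence is exact, which is the claim. I expect the only delicate point to be the bookkeeping around the dual and the Tate twist, namely confirming that the degree-$2$ piece of $\underline{D}(\widehat{\Br}^{\ast}_{\sX_n/A_n})(-1)$ is exactly the top-row entry $F^1\bD(\widehat{\Br}^{\ast}_{\sX_n/A_n})(-1)_{A_n}$ and that the evaluated display morphisms agree degreewise with the Nygaard--Ogus crystal morphisms; once these identifications are in place there is no further obstacle.
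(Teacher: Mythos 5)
Your proposal is correct and follows essentially the same route as the paper: the paper's proof likewise reduces exactness of the display sequence to exactness of the three sequences of $A_n$-modules obtained in degrees $0$, $1$, $2$ (i.e.\ on $F^{0}$, $F^{1}$, $F^{2}$, with the Tate twist shifting the filtration on the quotient term exactly as you describe), and then invokes the commutative diagram of Theorem \ref{3.2}. Your extra bookkeeping about negative degrees via the $t$-isomorphisms and about the compatibility of the dualized display morphism with the Nygaard--Ogus crystal morphism is implicit in the paper's terser argument.
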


\begin{proof}
It suffices to show that the following three sequences of  $A_{n}$-modules are exact.
\begin{align*}
 0 &\to D(\psi_{\sX_{n}/A_{n}})\xrightarrow{\rho_{A_{n}}} H^{2}_{\dR}(\sX_{n}/A_{n})\to D(\widehat{\Br}^{\ast}_{\sX_{n}/A_{n}}) \to 0 \\
 0 &\to F^{1}D(\psi_{\sX_{n}/A_{n}})\xrightarrow{\rho_{A_{n}}} F^{1}H^{2}_{\dR}(\sX_{n}/A_{n})\to D(\widehat{\Br}^{\ast}_{\sX_{n}/A_{n}}) \to 0 \\
 0 &\to F^{2}D(\psi_{\sX_{n}/A_{n}})\xrightarrow{\rho_{A_{n}}} F^{2}H^{2}_{\dR}(\sX_{n}/A_{n})\to F^{1}D(\widehat{\Br}^{\ast}_{\sX_{n}/A_{n}}) \to 0 
\end{align*}
The exactness of all sequences follows from Theorem \ref{3.2}.
\end{proof}
 
\begin{cor} \label{3.5}
Let $X/R$, $\psi_{X/R}$, and $\widehat{\Br}_{X/R}$ be the same as the beginning of this section. Then there is the following exact sequence of $\underline{\widehat{W}}(R)$-displays:
\begin{align*}
  0\to \underline{D}(\psi_{X/R})\to \underline{H}^{2}_{\crys}(X/\widehat{W}(R))\to \underline{D}(\widehat{\Br}^{\ast}_{X/R})(-1)\to 0.  
\end{align*}
Moreover, this sequence is compatible with base change with respect to $R$.
\end{cor}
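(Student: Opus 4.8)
The plan is to obtain the sequence over $\underline{\widehat{W}}(R)$ by base change from the sequence of $\underline{A_n}$-displays already constructed in Theorem \ref{3.4}. Exactly as in the proof of Proposition \ref{duality over alr} and in the construction of the $K3$ display, I would fix the universal deformation $\sX\to \Spf(A)$ of $X_0=X\otimes_R k$, together with the classifying map $A\to R$ ($t_i\mapsto x_i$) and the Teichm\"uller map $A\to \widehat{W}(R)$ ($t_i\mapsto [x_i]$), the latter inducing a morphism of frames $\underline{A}\to \underline{\widehat{W}}(R)$. Since $R$ is Artinian, both factor through $A_m$ for some $m\geq1$, giving $\underline{A_m}\to \underline{\widehat{W}}(R)$. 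First I would apply the base-change functor $-\otimes_{\underline{A_m}}\underline{\widehat{W}}(R)$ to the exact sequence of $\underline{A_m}$-displays in Theorem \ref{3.4} taken with $n=m$, so that the first map becomes the base change of $\underline{\rho_{A_m}}$.

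It then remains to identify the three base-changed displays. For the middle term, the base change $\underline{H}^{2}_{\dR}(\sX_m/A_m)\otimes_{\underline{A_m}}\underline{\widehat{W}}(R)$ is by construction the $K3$ display $\underline{H}^{2}_{\crys}(X/\widehat{W}(R))$, since that is precisely how the latter was defined. For the outer terms, I would invoke the base-change compatibility of the Dieudonn\'e display functor across frames used in Proposition \ref{duality over alr}: because $\psi_{X/R}=\psi_{\sX_m/A_m}\otimes_{A_m}R$ and $\widehat{\Br}^{\ast}_{X/R}=\widehat{\Br}^{\ast}_{\sX_m/A_m}\otimes_{A_m}R$ (the enlarged formal Brauer group and its dual commute with base change), base changing the tautological-frame Dieudonn\'e displays along $\underline{A_m}\to \underline{\widehat{W}}(R)$ yields the small-Witt-frame Dieudonn\'e displays $\underline{D}(\psi_{X/R})$ and $\underline{D}(\widehat{\Br}^{\ast}_{X/R})$, the Tate twist $(-1)$ being preserved by base change.

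Exactness is preserved: the underlying sequence of finite projective graded $S$-modules in Theorem \ref{3.4} has projective cokernel, hence splits, and a split short exact sequence remains exact after any base change. To see that the resulting first map induces $\hat{\rho}$ on underlying degree-$0$ modules, I would use that $\rho$ is a morphism of crystals. The commutative square relating $A_m\xrightarrow{\id} A_m$, the Teichm\"uller map $A_m\to \widehat{W}(R)$, the classifying map $A_m\to R$, and $\widehat{W}(R)\twoheadrightarrow R$ exhibits a morphism of PD thickenings lying over $\Spec(R)\to \Spec(A_m)$; the crystal transition isomorphisms then identify $\rho_{A_m}\otimes_{A_m}\widehat{W}(R)$ with the evaluation $\hat{\rho}$ of $\rho$ on $\widehat{W}(R)\twoheadrightarrow R$ (and likewise for the crystal $\bH^{2}_{\crys}$), which fixes the underlying-module identifications claimed above.

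Finally, for compatibility with base change in $R$: given a map $R\to R'$ of such Artin local rings, the classifying map $A\to R'$ of $X\otimes_R R'$ factors through $A\to R$, so one may choose a single $m$ working for both and realize the two sequences as base changes of the same $\underline{A_m}$-sequence; functoriality of frame base change then gives the compatibility along $\underline{\widehat{W}}(R)\to \underline{\widehat{W}}(R')$. The main obstacle I anticipate is the simultaneous bookkeeping of the three distinct base-change identifications --- that the tautological-frame Dieudonn\'e displays, the crystalline evaluation $\hat{\rho}$, and the $K3$ display all base change correctly and compatibly along $\underline{A_m}\to \underline{\widehat{W}}(R)$ --- rather than any single deep point, since Theorem \ref{3.4} together with the compatibilities set up in Propositions \ref{duality over tf} and \ref{duality over alr} carry the substantive content.
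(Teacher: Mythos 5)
Your proposal is correct and follows essentially the same route as the paper: the paper's proof consists precisely of base changing the exact sequence of Theorem \ref{3.4} along the morphism of frames $\underline{A_n}\to\underline{\widehat{W}}(R)$ induced by the Teichm\"uller lift $t_i\mapsto [x_i]$. The additional points you spell out (identification of the three base-changed terms, preservation of exactness via splitness, and the compatibility with base change in $R$) are left implicit in the paper but are exactly the right justifications.
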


\begin{proof}
Let $A\to R,(t_i\mapsto x_i)$ be the morphism which corresponds to the deformation $X/R$. We define $\phi: A\to \widehat{W}(R)$ by $\phi(t_i)=[x_i]$, where $[x_i]$ denotes the Teichm\"uller lift of $x_i$. Then, for a sufficiently large $n$, the map $\phi$ induces a morphism of frames $\underline{A_{n}}\to \underline{\widehat{W}}(R)$. Taking the base change of the sequence in Theorem \ref{3.4} by this morphism, we get the desired exact sequence.
\end{proof}

\section{A constrution of quasi-canonical liftings}
\label{Section:4}
In this section, we prove the main result of this paper.

\begin{thm} \label{4.1}
Let $X_{0}$ be a $K3$ surface of finite height $h<\infty$ over a finite field $k$ of characteristic $p\geq 3$. Then there exist a totally ramified finite extension $V/W(k)$ of degree $h$ and a quasi-canonical lifting $X/V$ of $X_{0}/k$.
\end{thm}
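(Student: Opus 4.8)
The plan is to construct the quasi-canonical lifting by lifting the $p$-divisible group $\widehat{\Br}_{X_0/k}$ to a suitable totally ramified extension and then transporting this lift to a deformation of the $K3$ surface via the display-theoretic machinery of Section \ref{Section:2}, using Corollary \ref{3.5} as the bridge. Recall that a quasi-canonical lifting in the sense of Nygaard--Ogus is a lifting $X/V$ whose crystalline cohomology carries, in an appropriate sense, a maximal amount of Tate/Frobenius structure; concretely it amounts to lifting the formal Brauer group together with its slope filtration. Since $X_0$ has finite height $h$, the formal Brauer group $\widehat{\Br}_{X_0/k}$ is a one-dimensional formal $p$-divisible group of height $h$, and the enlarged formal Brauer group $\psi_{X_0/k}$ is the extension of an \'etale part by $\widehat{\Br}_{X_0/k}$. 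The key structural input is Corollary \ref{3.5}, which over any Artin local $R$ expresses $\underline{H}^2_{\crys}(X/\widehat{W}(R))$ as an extension of $\underline{D}(\widehat{\Br}^{\ast}_{X/R})(-1)$ by $\underline{D}(\psi_{X/R})$, compatibly with base change.

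First I would identify the correct base $V$. Over $W(k)$ the Dieudonn\'e module of $\widehat{\Br}_{X_0/k}$ is isoclinic of slope $1/h$ (its dual having slope $(h-1)/h$), and the Newton/Hodge structure forces the quasi-canonical lift to exist only after a totally ramified base extension of degree exactly $h$; I would take $V$ to be the ring of integers of the degree-$h$ extension of $\mathrm{Frac}(W(k))$ over which the relevant one-dimensional formal group admits a lifting with the prescribed (canonical) Frobenius action. Next, using Lau's Dieudonn\'e theory (Theorem \ref{dieudonne theory}) together with the duality of Proposition \ref{duality over alr}, I would produce, over each Artinian quotient $V/\mathfrak{m}_V^n$, a lifting of the $p$-divisible group $\psi_{X_0/k}$ whose display is the ``canonical'' one, i.e.\ the one whose Frobenius respects the slope decomposition after base change to $V$. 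The point is that over $\widehat{W}(V/\mathfrak{m}_V^n)$ one has enough room, via the Teichm\"uller-type map $A\to \widehat{W}(R)$ used in Corollary \ref{3.5}, to realize the desired Frobenius-linear structure as an actual display over the small Witt frame.

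Then I would use the equivalence of Theorem \ref{dispdef}: a compatible system of $K3$ displays over the $\widehat{W}(V/\mathfrak{m}_V^n)$ lifting $\underline{H}^2_{\crys}(X_0/W(k))$ corresponds to a compatible system of deformations of $X_0$, hence by Grothendieck existence to a formal, and then algebraic, $K3$ surface $X/V$. The deformation is pinned down by prescribing its $K3$ display to be the unique extension (via Corollary \ref{3.5}) of $\underline{D}(\widehat{\Br}^{\ast})(-1)$ by $\underline{D}(\psi)$ built from the canonical lift of $\psi_{X_0/k}$ chosen above; the symmetric perfect pairing is transported through the self-duality of the middle term. Compatibility with base change in Corollary \ref{3.5} guarantees that these displays form an inverse system, so Theorem \ref{dispdef} yields a genuine deformation rather than merely a formal one. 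Finally I would verify that the resulting $X/V$ satisfies the defining properties of a quasi-canonical lifting in the sense of \cite[Definition 1.5]{no}, by translating those crystalline conditions into the display-theoretic exact sequence of Corollary \ref{3.5} and checking they hold for the canonical choice of $\psi$-lift.

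The main obstacle I expect is the precise construction and control of the canonical lift of $\psi_{X_0/k}$ over the ramified base $V$, together with showing that the resulting display actually satisfies the $K3$ display axioms (in particular that the Hodge filtration has the correct graded ranks $R\oplus R(-1)^{\oplus 20}\oplus R(-2)$ and that the transported pairing remains perfect and symmetric over each $V/\mathfrak{m}_V^n$). Equivalently, the delicate point is matching the slope filtration of $\widehat{\Br}_{X_0/k}$ with an integral display structure that descends through the frame morphism $\underline{A_n}\to \underline{\widehat{W}}(V/\mathfrak{m}_V^n)$; the finite-height hypothesis and the degree-$h$ ramification are exactly what make this matching possible, and verifying that the extension degree is forced to be $h$ (not merely bounded by $h$) will require a careful slope computation using the isocline structure of the formal Brauer group.
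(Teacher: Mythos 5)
Your overall skeleton matches the paper's: lift $\widehat{\Br}_{X_0/k}$ to a degree-$h$ totally ramified extension $V$, build a $K3$ display over each $V/\pi^{n+1}$, invoke Theorem \ref{dispdef} to get a formal deformation, algebraize, and verify quasi-canonicity. But there are genuine gaps at the two load-bearing steps. First, the construction of the $K3$ display: you propose to characterize it as ``the unique extension (via Corollary \ref{3.5}) of $\underline{D}(\widehat{\Br}^{\ast})(-1)$ by $\underline{D}(\psi)$.'' This does not work as stated. Corollary \ref{3.5} describes the display of a deformation that already exists; it is not a recipe for manufacturing a $K3$ display from a chosen lift of $\psi_{X_0/k}$, and such extensions are in any case not unique (the relevant Ext group need not vanish), nor does an abstract extension come with the symmetric perfect pairing and graded ranks $R\oplus R(-1)^{\oplus 20}\oplus R(-2)$ that the definition of a $K3$ display requires --- a difficulty you flag but do not resolve. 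The paper avoids all of this by taking the \emph{split} extension $\underline{K}(G_n)=\underline{D}(G_n)\oplus\underline{D}(H_n)\oplus\underline{D}(G_n^{\ast})(-1)$, with the pairing given explicitly by the duality pairing between the outer summands and the unique lift of the slope-one pairing on $\underline{D}(H_n)$; Corollary \ref{3.5} is then used only \emph{a posteriori}, to identify $\psi_{X_n}$ with $G_n\oplus H_n$ by comparing the exact sequence of the constructed $X_n$ with the tautological split one.

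Second, the quasi-canonicity verification is not actually carried out. The essential input, which your proposal only gestures at, is that Hazewinkel's theorem produces not just the lift $G$ over a degree-$h$ extension but also an endomorphism $F\colon G\to G$ lifting $\Fr^m$; it is this lifted Frobenius that furnishes an element $\gamma$ of the crystalline Weil group of nonzero degree acting on $H^2_{\dR}(X/V)\otimes\overline{K}$ and preserving the Hodge filtration, which is the criterion of \cite[Definition 1.5 and Theorem 1.9]{no}. Relatedly, your worry about showing the extension degree is ``forced to be $h$, not merely bounded by $h$'' is misplaced: the theorem only asserts existence over some degree-$h$ extension, and $h$ is simply what Hazewinkel's lifting theorem delivers for a one-dimensional formal group of height $h$ with a Frobenius lift. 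Finally, the algebraization step is not just Grothendieck existence: one must lift an ample line bundle from $X_0$ to the formal deformation, i.e.\ show $\Pic(X_n)\to\Pic(X_0)$ is surjective for this particular lifting (the paper cites Nygaard's argument); for a general formal deformation of a $K3$ surface this fails.
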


\begin{proof}
We follow the strategy of Nygaard-Ogus \cite{no}.
Instead of the crystalline deformation theory used in \cite{no},
we use the display-theoretic deformation theory of $K3$ surfaces.

We put $k=\bF_{p^m}$. Let $\widehat{\Br}_{X_0/k}$ (resp.\ $\psi_{X_0/k}$) be the formal Brauer group (resp.\ the enlarged formal Brauer group) of $X_0/k$. Let $\Fr:\widehat{\Br}_{X_{0}/k}\to \widehat{\Br}^{(p)}_{X_{0}/k}$ be the relative Frobenius morphism. By \cite[Theorem 24.2.6, Theorem 24.3.4]{haz}, there exists a totally ramified finite extension $V/W(k)$ of degree $h$, a lifting $G$ of $\widehat{\Br}_{X_{0}/k}$ over $V$, and an endomorphism $F:G\to G$ which lifts $\Fr^{m}: \widehat{\Br}_{X_{0}/k}\to \widehat{\Br}_{X_{0}/k}$. Let $\pi$ be a prime element of $V$.

For each $n \geq 1$, we put $G_{n}:= G\otimes_V V/\pi^{n+1}$. Let $H_{n}$ denote the unique lifting of the \'etale part of $\psi_{X_{0}/k}$ over $V/\pi^{n+1}$. We define a $\underline{\widehat{W}}(V/\pi^{n+1})$-display $\underline{K}(G_n)$ as 
\begin{align*}
 \underline{K}(G_n):=\underline{D}(G_n)\oplus \underline{D}(H_{n})\oplus \underline{D}(G_n^{\ast})(-1). 
\end{align*} Taking base change by $\underline{\widehat{W}}(V/\pi^{n+1})\to \underline{W}(k)$, the right hand side gives the slope decomposition of $\underline{H}^{2}_{\crys}(X_{0}/W(k))$. We have a canonical perfect pairing
\begin{align*}
 \underline{D}(G_n)\times \underline{D}(G_{n}^{\ast})(-1)\to \underline{\widehat{W}}(V/\pi^{n+1})(-2)  
\end{align*} and a perfect pairing 
\begin{align*}
 \underline{D}(H_{n})\times \underline{D}(H_{n})\to \underline{\widehat{W}}(V/\pi^{n+1})(-2)  
\end{align*} which is the unique lifting of the pairing on the slope one part of $\underline{H}^{2}_{\crys}(X_{0}/W(k))$ (\cite[Lemma 42]{zi1}). These pairings define a K3 display structure on $\underline{K}(G_n)$.

Let $\sX$ be a formal lifting which corresponds to the inverse system of $K3$ displays $\{\underline{K}(G_n)\}$ (Theorem \ref{dispdef}). 
Let $X_{n}:= \sX \otimes_V  V/\pi^{n+1}$. Then $X_n$ is a $K3$ surface over $V/\pi^{n+1}$ corresponding to the $K3$ display $\underline{K}(G_n)$. We consider the following morphisms of $\underline{\widehat{W}}(V/\pi^{n+1})$-displays:
\begin{align*}
  \underline{D}(G_n^{\ast})(-1)\hookrightarrow \underline{K}(G_n)=\underline{H}^{2}_{\crys}(X_n/\widehat{W}(V/\pi^{n+1}))\twoheadrightarrow \underline{D}(\widehat{\Br}^{\ast}_{X_n/(V/\pi^{n+1})})(-1),  
\end{align*} where the last morphism is the one in Corollary \ref{3.5}.
The composite of the above morphisms is an isomorphism of displays because it becomes an isomorphism after the base change by $\underline{\widehat{W}}(V/\pi^{n+1})\to \underline{W}(k)$. Then we have the following diagram.
\[
\xymatrix{
0 \ar[r] & \underline{D}(\psi_{X_n/(V/\pi^{n+1})}) \ar[r] \ar@{.>}[d] & \underline{H}^{2}_{\crys}(X_n/\widehat{W}(V/\pi^{n+1})) \ar[r] \ar@{=}[d] & \underline{D}(\widehat{\Br}^{\ast}_{X_n/(V/\pi^{n+1})})(-1) \ar[r] \ar[d]^-{\simeq} & 0
\\ 0 \ar[r] & \underline{D}(G_n)\oplus \underline{D}(H_n) \ar[r] & \underline{K}(G_n) \ar[r] & \underline{D}(G_n^{\ast})(-1) \ar[r] & 0
}
\]
In this diagram, both rows are exact by Theorem \ref{3.4}. So we get a canonical isomorphism of displays \begin{align*}
  \underline{D}(\psi_{X_n/(V/\pi^{n+1})})\stackrel{\sim}{\to} \underline{D}(G_n)\oplus \underline{D}(H_n)=\underline{D}(G_n\oplus H_n).  
\end{align*}

By Theorem \ref{dieudonne theory}, this induces an isomorphism
\[ \psi_{X_n/(V/\pi^{n+1})}\stackrel{\sim}{\to} G_n\oplus H_{n} \]
of $p$-divisible groups over $V/\pi^{n+1}$. In particular, the enlarged formal Brauer group $\psi_{X_n/(V/\pi^{n+1})}$ splits into a direct sum of a connected part and an \'etale part. Since the map $\Pic(X_n)\to \Pic(X_0)$ is surjective by the argument in the proof of \cite[Proposition 1.8]{ny}, there is a proper smooth scheme $X$ over $V$ and a line bundle $\sL$ on $X$ such that $X\otimes_V V/\pi^{n+1}=X_n$ and $\sL|_{X_0}$ is ample and primitive.

By \cite[Definition 1.5 and Theorem 1.9]{no}, in order to prove that $X/V$ is a quasi-canonical lifting of $X_0/k$, it suffices to prove that there exists $\gamma\in W_{\crys}(\overline{K})$ of non-zero degree such that the action of $\gamma$ on $H^2_{\dR}(X/V)\otimes_V \overline{K}$ preserves the Hodge filtration. Let $H$ denotes the unique lifting of the \'etale part of $\psi_{X_0/k}$. By the definition of $\underline{K}(G_n)$,
\begin{align*}
    H^2_{\dR}(X/V)&=\bD(G)_V\oplus \bD(H)_V\oplus \bD(G^{\ast})(-1)_V
    \\ &\supset F^1\bD(G)_V\oplus \bD(H)_V\oplus \bD(G^{\ast})(-1)_V
    \\ &\supset F^1\bD(G^{\ast})(-1)_V
\end{align*} is the Hodge filtration on $H^2_{\dR}(X/V)$. Here $\overline{K}$ is an algebraic closure of the fraction field of $V$ and $W_{\crys}(\overline{K})$ is the crystalline Weil group (for the definition, see \cite[Definition 4.1]{bo2}). By \cite[Proposition 3.14]{bo2}, we have  $W_{\crys}(\overline{K})$-equivariant isomorphisms
\begin{align*}
    H^2_{\dR}(X/V)\otimes \overline{K}&=(\bD(G)_V\oplus \bD(H)_V\oplus \bD(G^{\ast})(-1)_V)\otimes_V \overline{K}
    \\ &\cong (\bD(\widehat{\Br}_{X_0/k})_{W(k)}\oplus \bD(H_0)_{W(k)}\oplus \bD(\widehat{\Br}_{X_0/k}^{\ast})(-1)_{W(k)})\otimes_{W(k)} \overline{K}
    \\ &\cong H^2_{\crys}(X_0/W(k))\otimes_{W(k)} \overline{K}.
\end{align*} Since the isomorphism 
\[
 \bD(G^{\ast})_V\otimes_V \overline{K}\cong \bD(G_{0}^{\ast})_{W(k)}\otimes_{W(k)} \overline{K}=\bD(\widehat{\Br}_{X_0/k}^{\ast})_{W(k)}\otimes_{W(k)} \overline{K}
\]
 is functorial in $G$, an element $\gamma\in W_{\crys}(\overline{K})$ with $\deg(\gamma)=m$ acts on $\bD(G^{\ast})_V\otimes_V \overline{K}$ as $\bD(F^{\ast})\otimes \gamma$. Therefore, the action of such $\gamma\in W_{\crys}(\overline{K})$ on $H^2_{\dR}(X/V)\otimes_V \overline{K}$ preserves the Hodge filtration. This completes the proof.
\end{proof}

\subsection*{Acknowledgements}
The author would like to thank his advisor, Tetsushi  Ito, for useful discussions and warm encouragement. The author would also  like to thank Kazuhiro Ito for helpful discussions.


\begin{thebibliography}{99}
\bibitem[BBM]{bbm} P.\ Berthelot, L.\ Breen, W.\ Messing: \textit{Th\'eorie de Dieudonn\'e Cristalline II}, Lecture Notes in Mathematics, 930.\ Springer-Verlag, Berlin, 1982.
\bibitem[BO1]{bo} P.\ Berthelot, A.\ Ogus: \textit{Notes on crystalline cohomology}, Princeton University Press, Princeton, N.J.; University of Tokyo Press, Tokyo, 1978. 
\bibitem[BO2]{bo2} P.\ Berthelot, A.\ Ogus: \textit{F-isocrystals and de Rham cohomology.\ I}, Invent.\ Math.\ 72 (1983), no.2, 159-199.
\bibitem[De]{de} M.\ Demazure: \textit{Lectures on $p$-divisible groups}, Reprint of the 1972 original.\ Lecture Notes in Mathematics, 302.\ Springer-Verlag, Berlin, 1986.
\bibitem[Fo]{fo} J.-M.\ Fontaine: \textit{Cohomologie de de Rham, cohomologie cristalline et repr\'esentations $p$-adiques}, Algebraic geometry (Tokyo/Kyoto, 1982), 86-108, Lecture Notes in Math., 1016, Springer, Berlin, 1983.
\bibitem[Haz]{haz} M.\ Hazewinkel: \textit{Formal groups and applications}, Corrected reprint of the 1978 original.\ AMS Chelsea Publishing, Providence, RI, 2012.
\bibitem[IIK]{iik} K.\ Ito, T.\ Ito, T.\ Koshikawa: \textit{CM liftings of K3 surfaces over finite fields and their applications to the Tate conjecture}, Forum Math.\ Sigma (2021), Vol.\ 9: e29 1-70.
\bibitem[KM]{km} W.\ Kim, K.\ Madapusi Pera: \textit{2-adic integral canonical models}, Forum Math.\ Sigma 4 (2016), Paper No.\ e28, 34 pp. 
\bibitem[LZ1]{lz1} A.\ Langer, T.\ Zink: \textit{De Rham-Witt cohomology and displays}, Doc.\ Math.\ 12 (2007), 147-191.
\bibitem[LZ2]{lz2} A.\ Langer, T.\ Zink: \textit{Grothendieck-Messing deformation theory for varieties of K3-type}, Tunis.\ J.\ Math.\ 1 (2019), no.\ 4, 455-517. 
\bibitem[La1]{lau1} E.\ Lau: \textit{Relations between Dieudonn\'e displays and crystalline Dieudonn\'e theory}, Algebra Number Theory 8 (2014), no.\ 9, 2201-2262.
\bibitem[La2]{lau2} E.\ Lau: \textit{Higher frames and $G$-displays}, preprint, 2018,
\url{https://arxiv.org/abs/1809.09727}.
\bibitem[Laz]{lazard} M.\ Lazard: \textit{Commutative formal groups}, Lecture Notes in Mathematics, Vol.\ 443.\ Springer-Verlag, Berlin-New York, 1975.
\bibitem[MP1]{mp} K.\ Madapusi Pera: \textit{The Tate conjecture for K3 surfaces in odd characteristic},
  Invent.\ Math.\ 201 (2015), no.\ 2, 625-668.
\bibitem[MP2]{mp2} K.\ Madapusi Pera: \textit{Erratum to appendix to `2-adic integral canonical models'}, Forum Math.\ Sigma 8 (2020), Paper No.\ e14, 11 pp. 
\bibitem[Me]{me} W.\ Messing: \textit{The crystals associated to Barsotti-Tate groups: with applications to abelian schemes}, Lecture Notes in Mathematics, Vol.\ 264. Springer-Verlag, Berlin-New York, 1972.
\bibitem[Ny]{ny} N.\ Nygaard: \textit{The Tate conjecture for ordinary K3 surfaces over finite fields}, Invent.\ Math.\ 74 (1983), no.\ 2, 213-237. 
\bibitem[NO]{no} N.\ Nygaard, A.\ Ogus: \textit{Tate's conjecture for K3 surfaces of finite height}, Ann.\ of Math.\ (2) 122 (1985), no.\ 3, 461-507.
\bibitem[Zi1]{zi1} T.\ Zink: \textit{A Dieudonn\'e theory for $p$-divisible groups}, Class field theory---its centenary and prospectc (Tokyo, 1998), 139-160, Adv.\ Stud.\ Pure Math., 30, Math.\ Soc.\ Japan, Tokyo, 2001.
\bibitem[Zi2]{zi2} T.\ Zink: \textit{The display of a formal $p$-divisible group}, Cohomologies $p$-adiques et applications arithm\'etiques, I.\ Ast\'erisque No.\ 278 (2002), 127-248. 

\end{thebibliography}
\end{document}